\documentclass[a4paper, cleveref, autoref, thm-restate]{lipics-v2021}

\title{The Distribution of the Deepest Leaves in Binary Trees}
\titlerunning{The Distribution of the Deepest Leaves in Binary Trees}

\author{Olivier Bodini}{Sorbonne Paris Nord, LAGA}{Olivier.Bodini@univ-paris13.fr}{https://orcid.org/0000-0002-1867-667X}{}
\author{Antoine Genitrini}{Sorbonne Universit\'e, CNRS, LIP6, F-75005 Paris, France}{Antoine.Genitrini@lip6.fr}{https://orcid.org/0000-0002-5480-0236}{}
\author{Khaydar Nurligareev}{Sorbonne Universit\'e, CNRS, LIP6, F-75005 Paris, France}{Khaydar.Nurligareev@lip6.fr}{https://orcid.org/0000-0002-5687-2882}{}

\funding{\textsc{anr-fwf} project \textsc{PAnDAG} ANR-23-CE48-0014-01}

\authorrunning{O. Bodini, A. Genitrini, and K. Nurligareev}

\Copyright{Olivier Bodini, Antoine Genitrini, and Khaydar Nurligareev} 

\ccsdesc[500]{Mathematics of computing~Combinatoric problems}

\keywords{Binary trees, Depth, Height, Deepest Leaves, Catalan Numbers, Analytic Combinatorics}

\nolinenumbers 

\usepackage[utf8]{inputenc} 
\usepackage[T1]{fontenc}    
\usepackage{hyperref}       
\usepackage{url}            
\usepackage{booktabs}       
\usepackage{amsfonts}       
\usepackage{microtype}      

\usepackage{amsmath,amsthm,amssymb}
\usepackage{stmaryrd}
\usepackage{enumitem}
\usepackage{mathtools}

\usepackage{graphicx}
\usepackage{doi}

\newcommand{\Qone}{\mathrm Q}
\newcommand{\Qtwo}{\hat{\mathrm Q}}

\newcommand{\Qj}{\mathrm Q}
\newcommand{\Qjpr}{\mathrm Q'}

\usepackage{array,multirow,makecell,diagbox}
\setcellgapes{1pt}
\makegapedcells
\newcolumntype{R}[1]{>{\raggedleft\arraybackslash }b{#1}}
\newcolumntype{L}[1]{>{\raggedright\arraybackslash }b{#1}}
\newcolumntype{C}[1]{>{\centering\arraybackslash }b{#1}}

\date{}

 \newtheorem{problem}[theorem]{Problem}

\begin{document}

\maketitle

\begin{abstract}
    We study the extreme local structure of plane binary trees through the distribution of leaves at maximum depth. We first address two basic questions: (i) the asymptotic probability that exactly two leaves occur at the deepest level, and (ii) the asymptotic mean number of leaves at that level.\\
    These problems lead to generating functions coupled with the Catalan iteration
    $I_{k+1}(z)=1+zI_k(z)^2$ through quasi-logistic recurrences. We show that both associated series have dominant singularity $\rho=1/4$ and admit square-root singular expansions. The singular terms are obtained through a~three-zone dominated-convergence analysis of the critical scaling regime of the truncation error.\\
    We then extend the framework to derive the full limiting distribution of the number of deepest leaves. Enumerating trees with exactly $2m$ deepest leaves yields a hierarchy of differential equations that reduces to successive polynomial integrations. Encoding these parameters into a bivariate generating function transforms the nonlinear dynamics back into the Catalan recurrence. Using continuous iteration theory and the Fatou coordinate associated with an Abel equation, we obtain a functional equation characterizing the distribution.\\
    Finally, singularity analysis implies a strict exponential tail: the probability of having $2m$ deepest leaves satisfies $\kappa[m]\sim 4^{-m+1}$.
    Numerical evaluation gives an average number of deepest leaves equal to $\hat{\kappa}\approx 2.8037$, while the probability of exactly two deepest leaves is $\kappa\approx 0.7009$.
\end{abstract}


\newpage

\section{Introduction}

Binary trees are among the most fundamental objects in theoretical computer science, underlying classical data structures such as search trees, heaps, and priority queues. While global parameters such as height, path length, and profile have been extensively studied, much less is known about the extreme local structure near the boundary of large random trees.
A natural parameter of interest is the number of leaves at maximum depth. It characterizes the microscopic shape of large random trees, providing essential insights into the behavior of worst-case search paths and the saturation of tree-based architectures.

In this paper, we bridge the gap between structural combinatorics and analytic methods by providing a complete characterization of the deepest leaves distribution in plane binary trees. 
Following the principles of the \emph{symbolic method}~\cite[part A]{FS09}, the basic idea is to use generating functions to symbolically enumerate the constructed objects of a given size in the considered combinatorial class, and to derive an inductive equation (or, less constrained, a functional equation) satisfied by such functions.
Thus, using the symbolic method and \emph{singularity analysis}~\cite[part B]{FS09}, we move beyond first-order moments to derive the complete probability distribution, revealing a strict exponential decay.
Our approach introduces an analytical framework for resolving quasi-logistic recurrences, based on a~three-zone partition that enables a dominated-convergence analysis.
This approach not only provides high-precision constants for the typical width of the deepest level but also establishes new algebraic identities between core generating functions, offering a versatile toolkit for the study of extreme value laws in other recursive combinatorial structures.

The global enumeration properties of plane trees (and hence plane binary trees that are in bijection with plane trees), governed by the Catalan equation $C(z)=1+zC(z)^2$ and the dominant singularity $\rho=1/4$, are well-established.
Beyond simple asymptotic counting, the geometric structure of large random trees has been the subject of an abundant literature, largely centered on the universal limit behavior of these objects.
Pioneering papers on typical height~\cite{FlajoletOdlyzko1982} and tree profiles~\cite{DrmotaGittenberger1997} (synthesized in~\cite{Drmota2009, FS09}) have shown that, once normalized, the contour of a random plane tree (omitting its leaves) converges to the Brownian excursion.
In this probabilistic framework, notably developed through Aldous's Theory of the Continuum Random Tree~\cite{Aldous1, Aldous2, Aldous3}, the depth of the leaves is intrinsically linked to the local time and the supremum of the limit process (see left-hand side in Figure~\ref{fig:ma_figure_globale} for the classical tree-excursion correspondence).

The scaling limits mentioned above provide a powerful description of the tree's envelope.
Moreover, they often smooth out the fine combinatorial fluctuations inherent to the discrete structure.
Here, we shift the focus from the continuous global limit back to the discrete evolutionary dynamics and investigate the microscopic fluctuations at the tree's boundary.
By bridging the gap between dynamic growth and static unlabeled structures, our method — rooted in the analysis of polynomial iterates and discrete Riccati equations — unveils structural links to Mandelbrot polynomials that remain hidden in purely continuous limit models; see~\cite{BGNu26}.

While the main mass of the nodes is located at heights proportional to $\sqrt{n}$, the local structure near the deepest levels remains a delicate area subject to strong microscopic fluctuations.
The intricate dynamics of these extreme queues, as well as the collapse of probabilities and phase transitions in these tail regions, have been explored notably by Hwang and his co-authors~\cite{Hwang2005, FuchsHwangNeininger2006}, emphasizing the need for very fine scaling methods.

\newpage
In this work, the primary focus is on the following two problems.

\begin{problem}\label{prob:one}
 Evaluate the asymptotic probability that a plane binary tree of size $n$ (that is, with $n$ internal nodes) has exactly two leaves at the deepest level.
\end{problem}

\begin{problem}\label{prob:two}
 Compute the asymptotic mean of the total number of leaves at the deepest level of a plane binary tree of size $n$.
\end{problem}

We next extend the method to derive the full limiting distribution of the number of leaves at the deepest level (i.e. the width of the deepest level).

Note that in a binary tree, the minimal number of leaves at the deepest level (provided that this level is not the root) is exactly two.
The two leaves found at the deepest level correspond to the two children of a single internal node located at the previous level of the tree.
Thus, in Problem~\ref{prob:one} we seek to evaluate the asymptotic probability that a binary tree reaches this minimal configuration.

To formalize the questions raised in our two initial problems, let $\mathcal{T}_n$ and $\mathcal{T}_{n,2}$ denote, respectively, the set of rooted plane binary trees with $n$ internal nodes and its subset of trees whose deepest level contains exactly two leaves.
If $X(T)$ is the number of leaves at the deepest level of a tree $T$, then Problems~\ref{prob:one} and~\ref{prob:two} amount to computing, respectively, the limits
\[
    \kappa = \lim_{n\to\infty} \frac{|\mathcal{T}_{n,2}|}{|\mathcal{T}_n|}
    \qquad \mbox{and} \qquad
    \hat{\kappa} = \lim_{n\to\infty} \frac{1}{|\mathcal{T}_n|} \sum_{T\in\mathcal{T}_n} X(T).
\]
Building upon these foundations, we subsequently extend our focus to the full distribution by considering the asymptotic probability $\kappa^{[m]}$ of finding exactly $2m$ leaves at the deepest level.

The sequence $\big(|\mathcal{T}_n|\big)_{n\ge0}$ is well-known as the Catalan numbers, which are ubiquitous in combinatorics and appear in countless contexts. 
It is stored in~\href{https://oeis.org/A000108}{\texttt{OEIS A000108}}\footnote{OEIS stands for the On-line Encyclopedia of Integer Sequences.}.
At the same time, the decomposition of the number of trees according to the width of their deepest level does not seem to be recorded in OEIS; see Table~(\subref{tab:enum}) in Figure~\ref{fig:ma_figure_globale}. 
Note that the empty cells in the table correspond to a zero value; furthermore, we omit the degenerate case for the tree reduced to a single leaf.

\begin{figure}[htbp]
     \begin{subfigure}[b]{0.4\textwidth}
        \centering
         \includegraphics[width=0.45\textwidth]{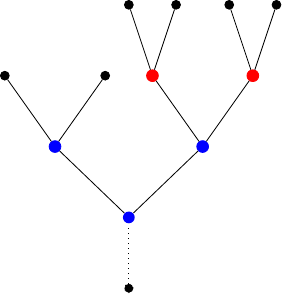}
         \\
         \includegraphics[width=0.9\textwidth]{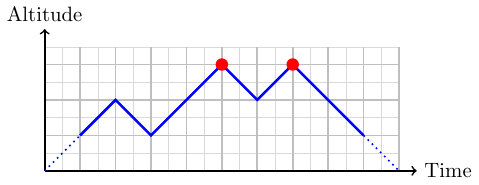}
         \caption{Plane tree and its corresponding Dyck excursion\footnotemark.}
         \label{fig:excursion}
     \end{subfigure}
     \hfill 
     \begin{subfigure}[b]{0.5\textwidth}
                \scalebox{0.9}{
                    \begin{tabular}{|c|cccc||c|} 
                        \hline 
                        \diagbox{$n$}{$2m$} & 2 &  4 &  6 &  8 & $|\mathcal{T}_n|$ \\ 
                        \hline
                        1 & 1 &  &  &  & 1 \\
                        2 & 2 &  &  &  & 2 \\
                        3 & 4 & 1 &  &  & 5\\
                        4 & 12 & 2 &  &  & 14\\
                        5 & 32 & 10 &  &  & 42\\
                        6 & 104 & 24 & 4 &  & 132\\
                        7 & 328 & 92 & 8 & 1 & 429\\
                        8 & 1080 & 308 & 40 & 2 & 1430\\
                        9 & 3648 & 1028 & 176 & 10 & 4862\\
                        10 & 12544 & 3584 & 584 & 84 & 16796\\
                        \hline
                    \end{tabular}
                }
        \vspace{0.2cm}
        \caption{First numbers of trees of size $n$ with $2m$ leaves in the deepest level.}
        \label{tab:enum}
     \end{subfigure}
     \caption{(left) Deepest leaves $\leftrightarrow$ Highest peaks; (right) Table of the first tree counts.}
     \label{fig:ma_figure_globale}
\end{figure}


\footnotetext{Figure~\ref{fig:ma_figure_globale} (left): Binary trees counted by internal nodes are in bijection with plane trees with one node added, which are in bijection with Dyck excursions (see e.g.~\cite{Drmota2009})}

The paper is organized as follows.
Section~\ref{sec:analysis} is devoted to the formal specification of our problems.
We introduce the generating functions that play the principal role in our investigation and show that the two quantities $\kappa$ and $\hat{\kappa}$ are intimately linked.
We also analyze here the error $e_k(z)$ made by truncating the iteration approach to a given depth~$k$, which is a key quantity in our study.
In particular, we establish the asymptotic behavior of this error at the critical point $z=\rho$ (Proposition~\ref{thm:e-asymp}) and in its neighborhood (Proposition~\ref{prop:e-scaling-complex}).

In Section~\ref{sec:Q-analysis}, we reveal a critical scaling regime that governs the dynamics of generating functions related to Problems~\ref{prob:one} and~\ref{prob:two}.
Based on this result, we show that the generating function of plane binary trees with two leaves at the deepest level admits a dominant singularity at $\rho=1/4$ and possesses a~square-root singular expansion (Theorem~\ref{thm:main}).
As a~consequence, owing to the singularity analysis, we deduce the limit proportion of trees with exactly two leaves at the deepest level.
The analysis of the critical scaling regime is based on a three-zone matching method inspired by modern asymptotic techniques~\cite{FuchsHwangNeininger2006}.

Ultimately, in Section~\ref{sec:generalization}, we establish the complete probability distribution of the number of leaves at the deepest level.
As the first step, by extending our recurrences to enumerate trees with exactly $2m$ leaves, we obtain a scaling cascade of non-linear differential equations.
This cascade is then encapsulated within a bivariate generating function, in order to map the non-linear discrete dynamics into the standard Catalan recurrence.
Using continuous iteration theory and the Fatou coordinate associated with an Abel equation (see, e.g. Milnor~\cite{milnor}), we derive a macroscopic functional equation that structurally binds all moments of the distribution.
The singularity analysis of this equation shows that the probability of observing exactly $2m$ leaves exhibits an exponential decay, specifically $\kappa^{[m]} \sim 4^{-m+1}$ as $m \to \infty$.

We conclude by presenting numerical values of the limit distribution (Section~\ref{sec:numerical_values}) and a~discussion of the perspectives (Section~\ref{sec:discussion}).

\section{Generating functions for trees up to a given height}
\label{sec:analysis}

In this section, we introduce the classical combinatorial framework and establish the necessary estimates for the error function associated with truncation in our problems.
We analyze the behavior of the error function at the critical point and describe the scaling regime that governs its dynamics in a neighborhood of this point.

\subsection{Preliminaries}

As a first step to approach Problems~\ref{prob:one} and~\ref{prob:two} analytically, we decompose trees according to their height.
Given an integer $k\in\mathbb{Z}_{\ge0}$, denote by $I_{k}(z)$ the generating function of plane binary trees of height at most $k$.
Here, $z$ is the marking variable for the internal nodes.
The generating functions $I_{k}(z)$ are actually polynomials and satisfy the recurrence
\begin{equation}\label{eq:L-def}
    I_0(z)=1, \qquad 
    I_{k+1}(z)=1+zI_{k}(z)^2 \quad 
    \mbox{for } k \ge 0.
\end{equation}
The combinatorial interpretation of this recurrence is the following: to get a tree of height at most $k+1$, we take two subtrees of height at most $k$ and attach them to the root.

To encode Problem~\ref{prob:one}, for any fixed $k\in\mathbb{Z}_{\ge0}$, we introduce the generating function $\Qone_k(z)$ of plane binary trees of height $k$ with exactly two leaves at the $k$th level.
The sequence of polynomials $\Qone_k(z)$ satisfies the recurrence
\begin{equation}\label{eq:Q1-def}
    \Qone_0(z)=0, \qquad 
    \Qone_1(z)=z, \qquad 
    \Qone_{k+1}(z) = 2zI_{k-1}(z)\Qone_k(z) \quad 
    \mbox{for } k \ge 1.
\end{equation}
Recurrence~\eqref{eq:Q1-def} is interpreted combinatorially in the same manner as~\eqref{eq:L-def}:
to obtain a~tree of height $k+1$ with two leaves at the $(k+1)$th level, we attach two subtrees to the root.
One of these subtrees is of height $k$ with exactly two leaves at the $k$th level (i.e., it is encoded by~$Q_k(z)$), and the other subtree is of height at most $k-1$ (so it is encoded by~$I_{k-1}(z)$).
The factor $2$ refers to two possible orders in which the subtrees can be attached to the root.

Problem~\ref{prob:two} is derived from an additive marking parameter: we additionally mark a leaf at the deepest level of a tree.
Formally, given an integer $k\in\mathbb{Z}_{\ge0}$, we denote by $\Qtwo_{k}(z)$ the generating function of plane binary trees of height $k$ with a distinguished leaf at the $k$th level.
Compared to $\Qone_{k}(z)$, the polynomials $\Qtwo_{k}(z)$ satisfy a shifted recurrence
\begin{equation}\label{eq:Q2-def}
    \Qtwo_0(z)=1, \qquad
    \Qtwo_{k+1}(z) = 2zI_{k}(z)\Qtwo_k(z) \quad
    \mbox{for } k \ge 0,
\end{equation}
which can be explained combinatorially in the same way as~\eqref{eq:Q1-def}.

The sought quantities are then extracted from the two global generating series
\[
    \Qone(z)=\sum\limits_{k\ge0}\Qone_k(z)
    \qquad \text{and} \qquad
    \Qtwo(z)=\sum\limits_{k\ge0}\Qtwo_k(z).
\]

\begin{lemma}\label{lem:Q1=zQ2}
    The following identities hold:
    \[
        \Qone_{k+1}(z) = z\Qtwo_k(z)
        \quad\mbox{for } k\ge0
        \qquad \text{and} \qquad
        \Qone(z) = z\Qtwo(z).
    \]
\end{lemma}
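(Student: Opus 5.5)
We prove the first identity by induction on $k$, using only the recurrences \eqref{eq:Q1-def} and \eqref{eq:Q2-def}.

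\emph{Base case.} For $k=0$ we have $\Qone_1(z)=z$ and $z\Qtwo_0(z)=z\cdot 1=z$, so the identity holds.

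\emph{Inductive step.} Assume $\Qone_{k+1}(z)=z\Qtwo_k(z)$ for some $k\ge0$. Applying \eqref{eq:Q1-def} with index $k+1\ge1$ gives
\[
    \Qone_{k+2}(z)=2zI_{k}(z)\Qone_{k+1}(z)=2zI_k(z)\,z\Qtwo_k(z)=z\bigl(2zI_k(z)\Qtwo_k(z)\bigr)=z\Qtwo_{k+1}(z),
\]
where the last equality is \eqref{eq:Q2-def}. The only point needing care is the index bookkeeping: the recurrence for $\Qone$ involves $I_{k-1}$ at step $k$, while the one for $\Qtwo$ involves $I_k$. This shift is exactly absorbed by the offset $\Qone_{k+1}\leftrightarrow\Qtwo_k$.

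The combinatorial picture confirms this. Replacing the two deepest leaves of a tree counted by $\Qone_{k+1}$ by their parent, now made a leaf and distinguished, is a bijection onto trees of height $k$ with a distinguished leaf at level $k$, and it removes exactly one internal node. (When there are exactly two deepest leaves they are siblings, so the map is well defined.) This explains the factor $z$.

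\emph{Series identity.} Since $\Qone_0(z)=0$, summing the termwise identity gives
\[
    \Qone(z)=\sum_{k\ge0}\Qone_{k+1}(z)=z\sum_{k\ge0}\Qtwo_k(z)=z\Qtwo(z).
\]
This holds as an identity of formal power series. The sums are well defined formally because $\Qtwo_k(z)$ is divisible by $z^k$: by induction from \eqref{eq:Q2-def}, $\Qtwo_{k+1}$ carries an extra factor $z$ relative to $\Qtwo_k$. So each coefficient receives only finitely many contributions.
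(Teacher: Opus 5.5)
Your proof is correct, but its main line differs from the paper's. The paper gives only the bijective argument: a tree of size $n$ with exactly two deepest leaves corresponds to a tree of size $n-1$ with a distinguished deepest leaf, by replacing that leaf with an internal node carrying two leaves. That is exactly your ``combinatorial picture'' read in the opposite direction.

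Your primary argument is instead an induction on the defining recurrences. You handle its one delicate point correctly: the recurrence for $\Qone_{k+1}$ uses $I_{k-1}$ while the one for $\Qtwo_{k+1}$ uses $I_k$, and the offset $\Qone_{k+1}\leftrightarrow\Qtwo_k$ absorbs this shift. The induction is a self-contained check that needs nothing beyond the two recurrences. Your divisibility remark ($z^k \mid \Qtwo_k$) properly justifies summing termwise as formal power series.

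The bijection works directly at the level of the combinatorial classes. It gives $[z^n]\Qone(z)=[z^{n-1}]\Qtwo(z)$ for every $n$ at once, with height shifted by exactly one, so both identities follow without induction or a summation argument. It also does not depend on the recurrences being correct, which the paper itself justifies only combinatorially.
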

\begin{proof}
    The proof follows from the fact that having two leaves at the deepest level of a tree of size $n$ is equivalent to choosing a single leaf at the deepest level of a tree of size $n-1$ and replacing it with a node with two leaves.
\end{proof}

Lemma~\ref{lem:Q1=zQ2} shows that the two series $\Qone(z)$ and $\Qtwo(z)$ share the same analytical backbone.
That is why in the core of the paper we focus on Problem~\ref{prob:one}.

\subsection{Definitions and notations}

Our study relies on the generating function of plane binary trees, commonly called Catalan and denoted by $C(z)$. 
According to the classical symbolic method designed in analytic combinatorics~\cite{FS09}, this series is derived in the following way.
A tree is decomposed into atoms representing nodes; we represent an atom using the formal variable $\mathcal{Z}$.
The combinatorial class $\mathcal{T}$ of plane binary trees then satisfies the relation
$\mathcal{T} = \mathcal{E} \cup \mathcal{Z} \times \mathcal{T}^2$,
combinatorially interpreted as follows: a tree in $\mathcal{T}$ is either the empty tree $\mathcal{E}$ or a root-node~$\mathcal{Z}$ with two attached subtrees of $\mathcal{T}$.
This formal equation is translated into the context of generating functions through the functional equation $C(z)=1+zC(z)^2$, whose resolution yields the explicit form:
\[
    C(z)=\frac{1-\sqrt{1-4z}}{2z}.
\]
The asymptotic behavior of the sequence $\big(|\mathcal{T}_n|\big)_{n\ge0}$ is determined by the radius of convergence $\rho=\frac{1}{4}$ of the generating function $C(z)$.
At the singularity, it takes the value $C(\rho)=2$.
To analyze the behavior in a neighborhood of this dominant singularity, it is particularly convenient to introduce the singular variable 
\[
    \tau=\sqrt{1-4z} \in [0,1) 
    \qquad\mbox{for } z \in (0,\rho],
\]
as well as an auxiliary quantity $r(z)=2zC(z)=1-\tau$.

To understand the structure of the deepest levels of a tree, it is natural to study families of truncated trees.
Indeed, the set of all plane binary trees can be approximated by the set of trees of height at most $k$ encoded by the polynomial $I_{k}(z)$ satisfying~\eqref{eq:L-def}.
Thus, we study the truncation error
\[
    e_k(z) := C(z)-I_{k}(z).
\]
From the combinatorial meaning, $e_k(z)>0$ for all $z \in (0,\rho]$.
By simple subtraction and factorization, one can see that this truncation error obeys the quasi-logistic recurrence (observed in the context of dynamical systems;
see the classical book by Strogatz~\cite{strogatz2015nonlinear}):
\begin{equation}\label{eq:e-recur}
    e_0(z)=C(z)-1, \qquad
    e_{k+1}(z)=r(z)e_k(z)-ze_k(z)^2 \quad
    \mbox{for } k \ge 0.
\end{equation}
In the following, the functions $e_k(z)$ will be a key quantity in our analysis.


\subsection{Asymptotic analysis of the truncation error}

Here we analyze the asymptotic behavior of the truncation error $e_k(z)$.
The first result presented in this section concerns its behavior at the critical point $\rho=\frac{1}{4}$ and is based on the classical literature on tree heights \cite{FlajoletOdlyzko1982,CDJ2001}.
The second result describes the scaling regime that governs the dynamics of $e_k(z)$ in a neighborhood of $\rho$, similarly to what is done in \cite{CDJ2001}.
The technical tools are based on analytic combinatorics; see Section~VI of the book~\cite{FS09}.

We start with an extension of the result presented by Flajolet and Odlyzko in their paper on the depth of binary trees~\cite{FlajoletOdlyzko1982}.

\begin{proposition}[Classical behavior at the critical point~\cite{FlajoletOdlyzko1982}]
\label{thm:e-asymp}
    At $z=\rho$, the truncation error admits the following expansion:
    \[
        e_k(\rho) \underset{k\to\infty}= \frac{4}{k}+O\left(\frac{\log k}{k^2}\right).
    \]
\end{proposition}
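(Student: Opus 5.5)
At $z=\rho=1/4$ we have $r(\rho)=1$ (since $\tau=0$), so recurrence~\eqref{eq:e-recur} becomes
\[
e_0=C(\rho)-1=1,\qquad e_{k+1}=e_k-\tfrac14 e_k^2 .
\]
This is the classical parabolic iteration near a neutral fixed point. The plan is to pass to reciprocals, which turns the quadratic recurrence into one with almost constant increments, and then to bootstrap.

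\textbf{Step 1 (qualitative behaviour).} A direct induction shows $0<e_k\le 1$ and that $(e_k)$ is strictly decreasing. Indeed, $x\mapsto x-x^2/4$ maps $(0,1]$ into $(0,1]$ and satisfies $x-x^2/4<x$ there. Hence $e_k\to\ell\in[0,1]$ with $\ell=\ell-\ell^2/4$, so $\ell=0$.

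\textbf{Step 2 (reciprocal recurrence).} Set $u_k=1/e_k$. Then
\[
u_{k+1}=\frac{1}{e_k(1-e_k/4)}=u_k\Bigl(1-\tfrac{1}{4u_k}\Bigr)^{-1}=u_k+\frac14+\frac{1}{16u_k}\cdot\frac{1}{1-1/(4u_k)}.
\]
Since $u_k\ge 1$, the last term lies in $\bigl(0,\tfrac{1}{12u_k}\bigr]$. Summing gives the crude bounds
\[
u_0+\tfrac k4\le u_k\le u_0+\tfrac k4+\tfrac k{12}.
\]
So $u_k\asymp k$; in particular $u_k\ge 1+k/4$.

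\textbf{Step 3 (bootstrap).} Feeding the lower bound back into the error term gives
\[
u_k=1+\frac k4+\sum_{j<k}O\!\Bigl(\frac{1}{1+j/4}\Bigr)=\frac k4+O(\log k).
\]
Inverting,
\[
e_k=\frac{1}{k/4+O(\log k)}=\frac4k\Bigl(1+O\bigl(\tfrac{\log k}{k}\bigr)\Bigr)=\frac4k+O\!\Bigl(\frac{\log k}{k^2}\Bigr),
\]
which is the claimed expansion. One could sharpen this to $u_k=k/4+\tfrac14\log k+O(1)$, consistent with Flajolet--Odlyzko, but that is not needed here.

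\textbf{Main obstacle.} The only delicate point is making the error term in Step~2 uniform. This requires the a priori lower bound $u_k\ge 1$, i.e.\ $e_k\le 1$, which keeps $1-e_k/4$ bounded away from zero; Step~1 supplies exactly this. After that, the argument is a routine summation of a harmonic-type series.
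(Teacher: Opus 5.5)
Your proof is correct and follows essentially the same route as the paper: pass to the reciprocal $1/e_k$, use that its increments exceed $1/4$ to get $e_k=O(1/k)$, bootstrap the $O(e_k)$ correction into an $O(\log k)$ term, and invert. The differences are cosmetic. You obtain positivity and $e_k\le 1$ by direct induction rather than from the combinatorial meaning, and you bound the whole remainder explicitly by $1/(12u_k)$ instead of writing it as $\rho^2 e_k+O(e_k^2)$; this makes the uniformity of the error term explicit.
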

For completeness, this result is proved in the Appendix~\ref{app:analysis}.

When moving very slightly away from the critical point, the error dynamics undergoes a~transition.
The following result describes this scaling regime.

\begin{proposition}[Complex scaling of $e_k(z)$]
\label{prop:e-scaling-complex}
    Let $\Delta$ be a $\Delta$-domain at $\rho=1/4$, that is, 
    \[
        \Delta = \{ z : |z| < \rho+\varepsilon,\; |\arg(z-\rho)| > \theta \}
    \]
    for some $\varepsilon>0$ and $\theta\in(0,\pi/2)$.  
    For $z\in\Delta$, with the principal branch of $\tau=\sqrt{1-4z}$ determined by $\operatorname{Re}(\tau)>0$ (so that $|\arg(\tau)| \le (\pi-\theta)/2$), 
    set $u_k = k\tau$ and assume that the sequence $(u_k)_{k\ge1}$ stays in a compact subset $\mathcal{K}$ of the right half-plane $\{w:\operatorname{Re}(w)>0\}$.
    In this case, as $z\to\rho$ in $\Delta$ (i.e., $\tau\to0$), we have
    \[
        e_k(z) = \tau\,\phi(u_k) + O\!\left(|\tau|^2 \log\frac1{|\tau|}\right),
            \qquad\text{where}\quad 
            \phi(u)=\frac{4}{e^{u}-1},
    \]
    and the error term is uniform for $u_k\in\mathcal{K}$.
\end{proposition}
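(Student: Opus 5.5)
The plan is to linearize the quasi-logistic recurrence~\eqref{eq:e-recur} by passing to reciprocals. Put $w_k=1/e_k(z)$ and $q=1/r(z)=1/(1-\tau)$, and recall $z=(1-\tau^2)/4$. Dividing~\eqref{eq:e-recur} by $e_k e_{k+1}$ gives the exact identity
\[
    w_{k+1}=q\,w_k+\frac{z}{r^2}+\frac{z^2 e_k}{r^2\,(r-z e_k)} .
\]
This is an affine recurrence in $w_k$ with a perturbation $\delta_k$ of size $O(|e_k|)$. Unrolling it yields
\[
    w_k=q^k w_0+\frac{z}{r^2}\,\frac{q^k-1}{q-1}+\sum_{j<k} q^{k-1-j}\delta_j .
\]

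The main term is handled as follows. We have $q-1=\tau/(1-\tau)$, hence $\frac{z}{r^2(q-1)}=\frac{1+\tau}{4\tau}$. Also $q^k=\exp(-k\log(1-\tau))=e^{u_k}\bigl(1+O(k|\tau|^2)\bigr)=e^{u_k}(1+O(|\tau|))$, uniformly for $u_k\in\mathcal K$. Finally $w_0=1/(C(z)-1)=O(1)$ near $\rho$. Together these give
\[
    w_k=\frac{e^{u_k}-1}{4\tau}\bigl(1+O(|\tau|)\bigr)+O(1)+R_k,
    \qquad R_k=\sum_{j<k}q^{k-1-j}\delta_j .
\]
Since $\mathcal K$ is compact in the right half-plane, $e^{u}-1$ is bounded away from $0$ on $\mathcal K$. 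Therefore the explicit part has modulus $\asymp 1/|\tau|$.

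It then suffices to prove $R_k=O(\log(1/|\tau|))$. Granting this, the relative error in $w_k$ is $O(|\tau|\log(1/|\tau|))$. Inverting then gives $e_k=\tau\phi(u_k)+O(|\tau|^2\log(1/|\tau|))$, which is the claim.

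Because $|q^{k-1-j}|=O(1)$ for $j<k$ with $k|\tau|$ bounded, the bound on $R_k$ reduces to the a priori estimate
\[
    |e_j(z)|\le \frac{c}{\,j+1\,}\qquad\text{for } 0\le j\le k .
\]
Summing this over $j<k$ gives $O(\log k)=O(\log(1/|\tau|))$. The estimate also implies $|r-ze_j|\ge 1/2$ for $j$ beyond a fixed $j_0$, while for the finitely many $j\le j_0$ one uses the continuity of the polynomials $e_j$ at $\rho$ together with $0<e_j(\rho)<1$. This keeps $\delta_j=O(|e_j|)$ and, in particular, ensures $e_j$ never vanishes.

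I expect this a priori bound in the complex $\Delta$-domain to be the main obstacle. I would prove it by a bootstrap induction on $j$ using the same unrolled formula, and the key point is a lower bound on the geometric sum:
\[
    \Bigl|\sum_{i<j}q^i\Bigr|\ge c'\,j \qquad\text{for } j\le k .
\]
This holds because $\operatorname{Re}\tau\ge0$ in $\Delta$, so the $q^i$ all lie in a sector of half-angle $<\pi/2$ around the positive axis (up to $O(j|\tau|^2)$ corrections), and their moduli are bounded below. Assuming $|e_i|\le c/(i+1)$ for all $i<j$, the perturbation part of $w_j$ is $O(\log j)$, which is negligible against $c'j/4$. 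Hence $|w_j|\ge c'' j$, which closes the induction for $j$ large. The finitely many initial indices are covered by the continuity argument above, applied at $z=\rho$ where Proposition~\ref{thm:e-asymp} gives the corresponding real behavior.

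Uniformity in $u_k\in\mathcal K$ comes from the fact that every constant above depends only on $\mathcal K$, on $\theta$, and on a neighbourhood of $\rho$.
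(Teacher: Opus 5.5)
Your proof is correct and follows the paper's skeleton: pass to $1/e_k$, solve the resulting almost-affine recurrence with an integrating factor, show that the accumulated perturbation is $O(\log(1/|\tau|))$ because $|e_j|\lesssim 1/j$, and invert. The differences are in execution.

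\textbf{The recurrence.} The paper expands $1/(1-x_k)$ to second order and works with the approximate recurrence $v_{k+1}=(1+\tau)v_k+\tfrac14+R_k$. It then has to control the leftover $\tau^2 v_j$ by a separate bootstrap, and it treats the early indices, where $x_j$ is not $O(|\tau|)$, somewhat loosely. Your exact identity, with $q=1/r$, forcing $z/r^2$ and the single perturbation $\delta_k=z^2e_k/(r^2(r-ze_k))$, makes the main term the exact geometric sum $\frac{1+\tau}{4\tau}(q^k-1)$. This avoids both issues.

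\textbf{The a priori bound.} This is the more substantive difference. The paper gets $|v_k(z)|\ge c_1k$ from positivity of coefficients, $|e_k(z)|\le e_k(|z|)\le e_k(\rho)$. That inequality is only valid for $|z|\le\rho$, yet every $\Delta$-domain contains points with $|z|>\rho$ arbitrarily close to $\rho$. Your bootstrap on $|e_j|\le c/(j+1)$ works on all of $\Delta$ near $\rho$, so your route actually repairs that step of the paper. To run it, fix $c$ large compared with both $1/c'$ and $\sup_j (j+1)e_j(\rho)$; then choose the threshold $j_0(c)$; then choose a neighbourhood of $\rho$ on which the finitely many $j\le j_0$ are handled by continuity. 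One small slip: $e_j=C-I_j$ is not a polynomial. It is, however, continuous at $\rho$ within $\Delta$ because $C=(1-\tau)/(2z)$, and that is all you use.

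\textbf{One justification must change.} The points $q^i$, $i<j\le k$, need not lie in a sector of half-angle $<\pi/2$. Indeed $\arg q^i\approx i\operatorname{Im}\tau$ reaches $\operatorname{Im}u_k$, which can exceed $\pi/2$ for $u_k\in\mathcal K$ when $\arg\tau$ is close to $\pi/2$ and $|u_k|$ is large. The lower bound $|\sum_{i<j}q^i|\ge c'j$ is nevertheless true, by the following argument.
\begin{itemize}
\item Write the sum as $(q^j-1)/(q-1)$, with $q^j=e^{j\tau}(1+O(|\tau|))$ and $q-1=\tau(1+O(|\tau|))$.
\item Note that $j\tau$ lies on the segment $[0,u_k]$.
\item The function $(e^w-1)/w$, set equal to $1$ at $w=0$, has no zeros on the compact set $\bigcup_{u\in\mathcal K}[0,u]$. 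Its nonzero points have positive real part, so they avoid $2\pi i\mathbb Z$.
\item Hence $|(e^w-1)/w|$ is bounded below there, which gives the bound with $c'$ depending only on $\mathcal K$.
\end{itemize}
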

Let us briefly describe the key ideas of the proof.
Since the fundamental iteration~\eqref{eq:e-recur} is a~discrete form of the Riccati equation, we study the inverse of the error to linearize the system to solve it.
The proof is then divided into several steps: providing a lower bound for~$e_k(z)$, expanding the recurrence by using this bound, deducing an upper bound for~$\tau/e_k(z)$, and finally establishing the convergence.
The technical details of the proof are provided in Appendix~\ref{app:analysis}.

\section{Generating functions for trees with two leaves at the deepest level}
\label{sec:Q-analysis}

The goal of this section is to show that the generating function $Q(z)$ of plane binary trees with exactly two leaves at the deepest level admits a square-root expansion at the dominant singularity $\rho$.
As a preliminary step, we analyze subclasses of trees of a given height.

To study the asymptotic behavior of the sequence $(\Qj_k)_{k\ge0}$, it is particularly convenient to rewrite recurrence~\eqref{eq:Q1-def} in multiplicative form. 
Using the relation $I_{k}(z) = C(z) - e_k(z)$ and the substitution $r(z) = 1-\tau$, we obtain the following evolution equation:
\begin{equation}\label{eq:Q-evol}
    \Qone_0(z)=0, \qquad 
    \Qone_1(z)=z, \qquad 
    \Qone_{k+1}(z) = r(z)\left(1-\frac{e_{k-1}(z)}{C(z)}\right)\Qone_k(z) \quad
    \mbox{for } k \ge 1.
\end{equation}
The emergence of the explicit scaling profile $\Psi(u)$ can be understood heuristically from this underlying discrete dynamics. 
Indeed, after normalization and first-order expansion, the recurrence for $\Qone_k(z)$ induces in the critical regime a differential equation of the form
\[
    \Psi'(u) = -\left(1 + \frac{2}{e^u-1}\right)\Psi(u),
\]
whose general solution is
\[
    \Psi(u) = K \frac{e^u}{(e^u - 1)^2}, \quad
    K\in\mathbb{R}.
\]
This heuristic argument is reflected by the following lemma.

\begin{lemma}[Complex estimate in the scaling zone]
\label{lem:Q-scaling}
    Let $\Delta$ be a $\Delta$-domain at $\rho=1/4$.
    Set $u_k = k\tau$ and assume that $(u_k)_{k\ge0}$ stays in a compact subset $\mathcal{K}$ of the right 
    half-plane $\{w:\operatorname{Re}(w)>0\}$.  
    In this case, as $z\to\rho$ in $\Delta$ (i.e., $\tau\to0$), we have uniformly for $u_k\in\mathcal{K}$:
    \[
    \Qj_k(\rho) - \Qj_k(z) = \tau^2 \left( \frac{K}{u_k^2} - \Psi(u_k) \right) + \tau^2 \mathcal{R}_k(\tau),
    \]
    where $\Psi(u)=K e^{u}/(e^{u}-1)^2$ with $K = \lim_{k\to\infty} k^2 \Qj_k(\rho)$, and the remainder satisfies
    \[
    \sup_{u_k\in\mathcal{K}} |\mathcal{R}_k(\tau)| = O\!\left(|\tau|\log\frac{1}{|\tau|}\right).
    \]
\end{lemma}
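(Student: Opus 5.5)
Our approach is to pass to logarithms of the multiplicative recurrence~\eqref{eq:Q-evol} and compare the products at $z$ and at $\rho$. From~\eqref{eq:Q-evol},
\[
\Qone_k(z)=z\prod_{j=1}^{k-1} r(z)\Bigl(1-\tfrac{e_{j-1}(z)}{C(z)}\Bigr).
\]
At $z=\rho$ we have $r(\rho)=1$ and $C(\rho)=2$, with $e_{j}(\rho)=4/j+O(\log j/j^2)$ by Proposition~\ref{thm:e-asymp}. Hence $\log\Qone_k(\rho)=-2\log k+c+O(\log k/k)$. This gives the existence of $K=\lim k^2\Qone_k(\rho)$, together with
\[
\Qone_k(\rho)=K k^{-2}\bigl(1+O(\log k/k)\bigr)=\tau^2\frac{K}{u_k^2}+O\bigl(|\tau|^3\log\tfrac1{|\tau|}\bigr),
\]
since $k\asymp 1/|\tau|$ on $\mathcal K$. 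It therefore suffices to show
\[
\Qone_k(z)=\tau^2\Psi(u_k)+O\bigl(|\tau|^3\log\tfrac1{|\tau|}\bigr)
\]
uniformly for $u_k\in\mathcal K$.

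To that end, we write the ratio
\[
\frac{\Qone_k(z)}{\Qone_k(\rho)}=\frac{z}{\rho}\,(1-\tau)^{k-1}\prod_{j=1}^{k-1}\frac{1-e_{j-1}(z)/C(z)}{1-e_{j-1}(\rho)/2}.
\]
The prefactor satisfies $(1-\tau)^{k-1}=e^{-u_k}(1+O(|\tau|))$, since $k\tau^2=O(|\tau|)$. For the product, we take logarithms and split the range of $j$ into three zones. In the first zone, $j\le J_0$ for a fixed large $J_0$, and there $e_j(z)-e_j(\rho)=O(|\tau|)$ by analyticity of the polynomials. In the intermediate zone, $J_0<j\le \delta/|\tau|$. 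In the scaling zone, $\delta/|\tau|<j<k$, where Proposition~\ref{prop:e-scaling-complex} applies with $u_j=j\tau$ lying in a slightly enlarged compact set; this is legitimate because $\arg u_j=\arg\tau$ is fixed along the ray.

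In the scaling zone, using $C(z)=2+O(|\tau|)$, each logarithmic term is
\[
-\tfrac{\tau}{2}\phi(u_j)+\tfrac{2}{j}+O\bigl(|\tau|^2\log\tfrac1{|\tau|}\bigr).
\]
The Riemann-sum comparison, with $du=\tau$, yields
\[
\sum \bigl(-\tfrac{\tau}{2}\phi(u_j)+\tfrac{2}{j}\bigr)=\int_{u_{\rm low}}^{u_k}\Bigl(\tfrac{2}{u}-\tfrac{2}{e^u-1}\Bigr)du+O\bigl(|\tau|\log\tfrac1{|\tau|}\bigr),
\]
with the $\log$ absorbing the Euler–Maclaurin and summed error terms.

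The main obstacle is the intermediate zone, where $u_j\to0$ and Proposition~\ref{prop:e-scaling-complex} is not uniform. There I would use the Riccati linearization from its proof, i.e.\ the estimate $1/e_j(z)=\frac{\tau}{4}\coth$-type expression in $u_j$ plus a controlled error, to show
\[
e_j(z)-e_j(\rho)=O\bigl(j|\tau|^2+\log j/j^2\bigr)
\]
uniformly for $j\le\delta/|\tau|$. Summing this over $j\le\delta/|\tau|$ gives a contribution $O(|\tau|\log\frac1{|\tau|})$ plus an $O(\delta)$ piece. This piece matches exactly the small-$u$ part of the integral: the integrand $2/u-2/(e^u-1)$ is regular at $0$, so letting the integral run from $0$ costs only $O(\delta)$ and cancels consistently, and then we let $\delta\to0$ at a rate tied to $\tau$ (or choose $\delta=|\tau|^{1/2}$ and check the errors). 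If this proves delicate, I would instead bound the intermediate zone via dominated convergence, using the monotone bounds $e_j(\rho)$-type majorants from the lower/upper bounds established for $e_k$.

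Finally, assembling the pieces, $\log(\Qone_k(z)/\Qone_k(\rho))$ equals
\[
-u_k+\int_0^{u_k}\Bigl(\tfrac{2}{u}-\tfrac{2}{e^u-1}\Bigr)du+O\bigl(|\tau|\log\tfrac1{|\tau|}\bigr).
\]
The integral evaluates to $2\log u_k-2\log(1-e^{-u_k})$, which yields
\[
\frac{\Qone_k(z)}{\Qone_k(\rho)}=\frac{u_k^2e^{u_k}}{(e^{u_k}-1)^2}\bigl(1+O(|\tau|\log\tfrac1{|\tau|})\bigr).
\]
Multiplying by $\Qone_k(\rho)=\tau^2K/u_k^2+O(|\tau|^3\log\frac1{|\tau|})$ gives $\Qone_k(z)=\tau^2\Psi(u_k)+O(|\tau|^3\log\frac1{|\tau|})$. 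Combined with the expansion of $\Qone_k(\rho)$ from the first step, this is the stated formula, and uniformity on $\mathcal K$ follows from the uniformity of every estimate used.
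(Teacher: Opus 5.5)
Your ratio formulation is better structured than the paper's own argument. The paper inserts the profile of Proposition~\ref{prop:e-scaling-complex} into every factor of the product, including those with $u_j\to0$ where that proposition gives nothing, and then fixes the constant by letting $u_k\to0$. Dividing by $\Qone_k(\rho)$ instead makes $K$ appear automatically and leaves the regular integrand $2/u-2/(e^u-1)$. Your first and scaling zones are fine.

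The gap is the intermediate zone $J_0<j\le\delta/|\tau|$, where your proposed estimate is wrong. The bound $e_j(z)-e_j(\rho)=O(j|\tau|^2+\log j/j^2)$ is false. Since $e_j=C-I_j$ and $C(z)=2/(1+\tau)$, one has exactly
\[
  e_j(z)-e_j(\rho)=-\frac{2\tau}{1+\tau}-\bigl(I_j(z)-I_j(\rho)\bigr)=-2\tau+O\bigl(j|\tau|^2\bigr),
\]
where the last step uses $|I_j'|=O(j)$ on the segment $[\rho,z]$ for $j\le\delta/|\tau|$. For $j\approx|\tau|^{-3/4}$ the term $-2\tau$ dominates both of your error terms, since $j|\tau|^2\approx|\tau|^{5/4}$ and $\log j/j^2\approx|\tau|^{3/2}\log\frac1{|\tau|}$. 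This $-2\tau$ is exactly what makes each intermediate log-factor equal to $\tau+O(|\tau|/j+j|\tau|^2)$. It is also what produces the ``$O(\delta)$ piece'' you mention, which your stated bound could not produce. Separately, summing your $\log j/j^2$ term over $j>J_0$ gives $O(\log J_0/J_0)$, not $O(|\tau|\log\frac1{|\tau|})$.

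Even with the corrected estimate, a fixed three-zone split does not give the stated rate. The intermediate sum becomes $u_{\rm low}+O(\delta^2)+O(|\tau|\log\frac1{|\tau|})$, while $\int_0^{u_{\rm low}}\bigl(\frac2u-\frac2{e^u-1}\bigr)\,du=u_{\rm low}+O(\delta^2)$. So the two match only up to $O(\delta^2)$, and refining the expansion of $I_j(z)-I_j(\rho)$ just trades this for a higher power of $\delta$.

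With $\delta$ fixed, this residual does not vanish as $\tau\to0$. Letting $\delta\to0$ with $\tau$ breaks the uniformity of Proposition~\ref{prop:e-scaling-complex}: its proof only gives $e_j(z)-\tau\phi(u_j)=O\bigl(|\tau|^2\log\frac1{|\tau|}/|u_j|^2\bigr)$, so the scaling zone then costs $O\bigl(|\tau|\log\frac1{|\tau|}/\delta\bigr)$. Balancing the two errors yields at best about $O\bigl((|\tau|\log\frac1{|\tau|})^{2/3}\bigr)$. The choice $\delta=|\tau|^{1/2}$ gives $O\bigl(|\tau|^{1/2}\log\frac1{|\tau|}\bigr)$, and the dominated-convergence fallback gives only $o(1)$.

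So your plan proves $\sup_{u_k\in\mathcal K}|\mathcal R_k(\tau)|=o(1)$, not $O(|\tau|\log\frac1{|\tau|})$. Incidentally, $o(1)$ is all that the proof of Theorem~\ref{thm:main} uses. To reach the stated rate you need a comparison that is uniform over all $1\le j\le k$ rather than a split. For instance, show that $\log\bigl(2zI_{j-1}(z)\bigr)-\log\bigl(\tfrac12 I_{j-1}(\rho)\bigr)=\tau g(u_j)+\eta_j$, with $g(u)=\frac2u-1-\frac2{e^u-1}$ and $\sum_{j<k}|\eta_j|=O(|\tau|\log\frac1{|\tau|})$. One way is to run the Riccati recursions for $1/e_j(z)$ and $1/e_j(\rho)$ side by side, so that their logarithmic corrections cancel to the required order.
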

The proof proceeds in four steps: expressing $Q_k(z)$ as a product, applying the scaling profiles from Proposition~\ref{prop:e-scaling-complex}, evaluating the resulting sum as an integral, and matching the constant with the critical case from Proposition~\ref{thm:e-asymp}.
The proof details are provided in the Appendix~\ref{app:Q-analysis}.

Now we are ready to explore the behavior of $Q(z)$ in a neighborhood of $\rho=1/4$.

\begin{theorem}
\label{thm:main}
    There exists a constant $B>0$ such that, as $z\to\rho$ within a $\Delta$-domain at $\rho$,
    \[
        \Qj(z) = \Qj(\rho) - B\sqrt{1-4z} + o\bigl(\sqrt{1-4z}\bigr).
    \]
    Consequently, the coefficients satisfy
    $\displaystyle{[z^n]\Qj(z) \sim \frac{B}{2\sqrt{\pi}}\,4^n n^{-3/2}},$
    and the limit proportion of trees having exactly two leaves at the deepest level is $\kappa = B/2$.
\end{theorem}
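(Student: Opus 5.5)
The plan is to write $\Qj(\rho)-\Qj(z)=\sum_{k\ge1}\bigl(\Qj_k(\rho)-\Qj_k(z)\bigr)$ and show that this sum equals $B\tau+o(\tau)$, with $\tau=\sqrt{1-4z}$, uniformly as $z\to\rho$ in $\Delta$. The limiting sum will be a Riemann sum with mesh $|\tau|$ of the profile from Lemma~\ref{lem:Q-scaling}. Since $\tau^2\sum_k F(k\tau)\approx \tau\int_0^\infty F(u)\,du$ along the ray $\arg u=\arg\tau$, the natural candidate is
\[
B=\int_0^\infty\Bigl(\frac{K}{u^2}-\frac{Ke^u}{(e^u-1)^2}\Bigr)du .
\]
The integrand is $K/12+O(u^2)$ at $0$ and $K/u^2+O(e^{-u})$ at $\infty$, so the integral converges. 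Using $\int (e^u/(e^u-1)^2-1/u^2)\,du=1/u-1/(e^u-1)$, one finds $B=K/2$. Since $K=\lim k^2\Qj_k(\rho)>0$, we get $B>0$. Because the integrand is analytic in the sector, Cauchy's theorem allows rotating the ray back to the real axis. Once this expansion is established, the transfer theorem of~\cite{FS09} applied to $-B\tau$ gives $[z^n]\Qj(z)\sim \frac{B}{2\sqrt\pi}4^n n^{-3/2}$. Dividing by $|\mathcal T_n|\sim 4^n/(\sqrt\pi n^{3/2})$ yields $\kappa=B/2$. One should also check that $\Qj$ is analytic in a $\Delta$-domain; this follows from $|\Qj_k(z)|$ decaying geometrically for $z\in\Delta$ away from $\rho$, since $|1-\tau|<1$ there and the $e_k$ are controlled.

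To make the Riemann sum rigorous, I would use the three-zone decomposition with two cutoffs $0<\delta<1<A$:
\begin{itemize}
\item[(i)] the \emph{small} zone $k\le\delta/|\tau|$;
\item[(ii)] the \emph{scaling} zone $\delta/|\tau|<k<A/|\tau|$;
\item[(iii)] the \emph{tail} zone $k\ge A/|\tau|$.
\end{itemize}
In zone (ii) the points $u_k$ lie in a compact subset of the sector $\{|\arg u|\le(\pi-\theta)/2,\ \delta\le|u|\le A\}$. Lemma~\ref{lem:Q-scaling} then applies uniformly, giving a contribution of $\tau^2\sum F(u_k)+O(\tau^2\cdot|\tau|^{-1}\cdot|\tau|\log(1/|\tau|))$. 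The Riemann sum converges to $\tau\int_{\delta}^{A}F$ along the ray, with error $O(|\tau|^2)$ because $F$ is smooth. In zone (iii) the two terms are bounded separately. First, $\sum_{k\ge A/|\tau|}\Qj_k(\rho)=O(\sum k^{-2})=O(|\tau|/A)$ by Proposition~\ref{thm:e-asymp} and the definition of $K$. Second, $\Qj_k(z)$ is the product $z\prod_{j}(1-\tau)(1-e_{j-1}/C)$. Its modulus should decay like $|1-\tau|^k\cdot O(k^{-2})$, or at least like $e^{-c\operatorname{Re}(k\tau)}$ times a bounded factor, so its tail sum is $O(|\tau| e^{-cA})$. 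Both bounds are $o(1)\cdot|\tau|$ as $A\to\infty$.

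The main obstacle is zone (i), where Lemma~\ref{lem:Q-scaling} is not available. There I would prove a direct bound $|\Qj_k(\rho)-\Qj_k(z)|\le C\,k^{-2}\min(1,k^2|\tau|^2)\cdot(1+\text{small})$. To do so, I would compare the products term by term, using $|e_j(z)-e_j(\rho)|=O(|\tau|^2 j)$ for $j\lesssim 1/|\tau|$, which I would obtain by linearizing recurrence~\eqref{eq:e-recur} or from the proof of Proposition~\ref{prop:e-scaling-complex}. Together with $(1-\tau)^k=1-k\tau+O(k^2|\tau|^2)$, this should show that the relative difference is $O(k|\tau|+k^2|\tau|^2\log)$. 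Summing over this zone gives $\sum_{k\le\delta/|\tau|}k^{-2}\cdot k|\tau|$, which is only $O(|\tau|\log(1/|\tau|))$, too crude. So the linear term must be matched rather than merely bounded. The pure $-k\tau$ drift from $r^k$ must cancel against the $-2\tau/u$-type correction in $e_{j}$, which is precisely why $F(u)=K/12+O(u^2)$ has no $1/u$ singularity. Therefore I would expand $\log\Qj_k(z)-\log\Qj_k(\rho)$ to first order in $\tau$, using $\phi(u)=4/u-2+u/3+\dots$, and verify that the $O(k\tau)$ terms cancel. This leaves a contribution of $O(\sum_{k\le\delta/|\tau|} |\tau|^2)=O(\delta|\tau|)$, and letting $\delta\to0$ after $\tau\to0$ closes the argument.
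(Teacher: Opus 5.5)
Your architecture matches the paper's: the same three zones, Lemma~\ref{lem:Q-scaling} in the window $\delta<k|\tau|<A$, the bound $Q_k(\rho)=O(k^{-2})$ plus an exponential tail for $k\ge A/|\tau|$, the evaluation $B=K/2$, and the transfer theorem. Your remark that the Riemann sum lives on the ray $\arg u=\arg\tau$ and must be rotated back to $[0,\infty)$ by Cauchy's theorem is more careful than the paper. There is a minor point in zone (iii): the prefactor of $e^{-ck\operatorname{Re}\tau}$ must be $O(|\tau|^2)$, as in Lemma~\ref{lemme2}, not merely bounded. Otherwise the tail is $O(e^{-cA}/|\tau|)$ rather than $O(|\tau|e^{-cA})$.

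The genuine gap is zone (i), which you correctly call the crux but do not close. The estimate you plan to use, $|e_j(z)-e_j(\rho)|=O(|\tau|^2 j)$, is false. Since $e_j=C-I_j$ and $C(z)-C(\rho)=-\tau(1-\tau)/(2z)\sim-2\tau$, the difference $e_j(z)-e_j(\rho)$ is of exact order $|\tau|$ when $j\ll1/|\tau|$. This is what creates your spurious $O(k|\tau|)$ relative error. Your repair expands with $\phi(u)=4/u-2+u/3+\cdots$ and checks that the $O(k\tau)$ terms cancel, but this uses Proposition~\ref{prop:e-scaling-complex} where it does not hold. That proposition needs $u_k$ in a fixed compact subset of the open half-plane, and it cannot extend to bounded $k$: letting $\tau\to0$ would force $e_k(\rho)=4/k$, whereas $e_1(\rho)=3/4$. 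So the cancellation is asserted rather than proved. Proving it along your route would need a new difference estimate for $e_j$, uniform down to $j=1$.

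The missing observation is that nothing needs to cancel. $Q_k$ is a polynomial in $z=(1-\tau^2)/4$, hence even in $\tau$. The factorisation $2zI_{j-1}(z)=(1-\tau)\bigl(1-e_{j-1}(z)/C(z)\bigr)$ merely splits a polynomial into two non-polynomial pieces whose $O(\tau)$ parts cancel identically. Your observation that $F(u)=K/12+O(u^2)$ has no $1/u$ term is a shadow of this.

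The paper exploits this through a uniform bound on $Q_k'$ near $\rho$ (Lemma~\ref{lemme3}, where $Q_k'(\rho)\to K/3$). That gives $|Q_k(\rho)-Q_k(z)|\le M_0|z-\rho|=M_0|\tau|^2/4$, hence $O(\delta)$ for the whole zone after dividing by $\tau$.

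In your product language the fix is just as short if you keep the polynomial factors. Set $d_j=I_j(z)-I_j(\rho)$. The identity $d_{j+1}=(z-\rho)I_j(z)^2+\rho\,d_j\bigl(2I_j(\rho)+d_j\bigr)$, together with $2\rho I_j(\rho)<1$, yields by induction $|d_j|=O(j|\tau|^2)$ for $j\le\delta/|\tau|$. This is the correct version of your estimate, with $I_j$ in place of $e_j$. Since $I_{j-1}(\rho)\ge1$, each ratio $zI_{j-1}(z)/\bigl(\rho I_{j-1}(\rho)\bigr)$ is $1+O(j|\tau|^2)$. Hence $Q_k(z)/Q_k(\rho)=1+O(k^2|\tau|^2)$ and $|Q_k(\rho)-Q_k(z)|=O(|\tau|^2)$ throughout zone (i), with no appeal to the scaling profile.
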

\begin{corollary}
\label{cor:B-expr}
    The expansion constant and the limit proportion satisfy the following exact relations:
    $B = K/2$ and $\kappa = K/4$.
\end{corollary}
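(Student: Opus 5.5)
The plan is to identify the constant $B$ of Theorem~\ref{thm:main} directly as a Riemann-sum limit built from Lemma~\ref{lem:Q-scaling}. Then $\kappa=B/2$ (already part of Theorem~\ref{thm:main}) gives the second relation immediately. So everything reduces to proving
\[
    \lim_{\tau\to0}\frac{\Qj(\rho)-\Qj(z)}{\tau}=\frac K2 .
\]
I start from the termwise decomposition
\[
    \Qj(\rho)-\Qj(z)=\sum_{k\ge1}\bigl(\Qj_k(\rho)-\Qj_k(z)\bigr).
\]
I then split the range of $k$ into three zones according to $u_k=k\tau$: a small zone $k\le \delta/|\tau|$, a central zone $\delta/|\tau|<k\le M/|\tau|$, and a tail $k>M/|\tau|$, with $0<\delta<M$ fixed.

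In the central zone, the ray $\{u_k\}$ stays in a compact subset of the right half-plane, because $\arg\tau$ is bounded away from $\pm\pi/2$ in the $\Delta$-domain. Lemma~\ref{lem:Q-scaling} therefore applies uniformly. The central sum equals
\[
    \tau\sum_k \tau\,\bigl(K/u_k^2-\Psi(u_k)\bigr)+O\Bigl(M|\tau|^2\log\tfrac1{|\tau|}\Bigr),
\]
which is a Riemann sum with mesh $\tau$ along the ray. Dividing by $\tau$, it tends to $\int_\delta^M (K/u^2-\Psi(u))\,du$, where the integral is taken along the ray but equals the real integral by analyticity and decay.

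The integral itself is explicit. An antiderivative of $K/u^2-Ke^u/(e^u-1)^2$ is $-K/u+K/(e^u-1)$, which vanishes at $\infty$. Near $0$ it behaves like
\[
    -\frac Ku+\frac Ku\Bigl(1-\frac u2+O(u^2)\Bigr)\to-\frac K2 .
\]
Hence $\int_0^\infty (K/u^2-\Psi(u))\,du=K/2$. The integrand is bounded near $0$ (it tends to $K/12$), so the contributions from $(0,\delta)$ and $(M,\infty)$ are $O(\delta)+O(1/M)$.

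The main obstacle is controlling the two outer zones uniformly, so that dominated convergence lets $\delta\to0$ and $M\to\infty$ after $\tau\to0$.

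For the small zone, I would use the product form~\eqref{eq:Q-evol} together with $\Qj_k(\rho)\sim K/k^2$ (Proposition~\ref{thm:e-asymp}). Comparing the products for $z$ and $\rho$ factor by factor, using $r(z)-r(\rho)=-\tau$ and $e_j(z)-e_j(\rho)=O(|\tau|^2 j)$ for $j|\tau|\le\delta$ (obtained from the linearized Riccati recursion for $1/e_j$), I aim for the bound $|\Qj_k(\rho)-\Qj_k(z)|=O(|\tau|^2)$ uniformly for $k|\tau|\le\delta$. That gives a total contribution $O(\delta|\tau|)$.

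For the tail, the bound $|\Qj_k(\rho)|=O(k^{-2})$ handles the $\rho$-part, contributing $O(|\tau|/M)$. The $z$-part is handled through $|r(z)(1-e_{k-1}/C)|\le|1-\tau|\,(1+O(|e_{k-1}|))$, where $|e_k(z)|\lesssim|\tau|\,|\phi(u_k)|$ decays exponentially in $\operatorname{Re}u_k$. This gives geometric decay $|\Qj_k(z)|\lesssim |\tau|^2 e^{-c\operatorname{Re}u_k}$, so the $z$-part contributes $O(|\tau|e^{-cM})$.

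Letting $\tau\to0$, then $\delta\to0$ and $M\to\infty$, I obtain $(\Qj(\rho)-\Qj(z))/\tau\to K/2$. Comparing with Theorem~\ref{thm:main} gives $B=K/2$, and hence $\kappa=B/2=K/4$.
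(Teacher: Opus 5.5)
Your evaluation of the integral is exactly the paper's proof of this corollary. The paper takes $B=\int_0^\infty\bigl(K/u^2-\Psi(u)\bigr)\,du$ as already established in the proof of Theorem~\ref{thm:main}, and integrates with the primitive $K\bigl(\tfrac12\coth\tfrac u2-\tfrac1u\bigr)$. That primitive is your $K\bigl(\tfrac{1}{e^u-1}-\tfrac1u\bigr)$ plus the constant $K/2$, so $B=K/2$ and $\kappa=B/2=K/4$ go through. Re-deriving the three-zone limit is therefore unnecessary. Most of your version replays the proof of Theorem~\ref{thm:main}, and your remark that the Riemann sum runs along the ray $\arg\tau$ and must be rotated back to $[0,\infty)$ is, if anything, more careful than the paper.

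You depart from that proof in the small zone, and there your argument is wrong as stated. The paper gets $|Q_k(\rho)-Q_k(z)|=O(|\tau|^2)$ there from the uniform bound on $Q_k'$ in Lemma~\ref{lemme3}. Your ingredient $e_j(z)-e_j(\rho)=O(|\tau|^2 j)$ is false. Since $C(z)=2/(1+\tau)$ and $I_j$ is a polynomial, $e_j(z)-e_j(\rho)=\bigl(C(z)-C(\rho)\bigr)-\bigl(I_j(z)-I_j(\rho)\bigr)=-2\tau+O(|\tau|^2 j)$. Moreover, the two ingredients exactly as you list them cannot give $O(|\tau|^2)$. The factor $r(z)^{k-1}=(1-\tau)^{k-1}$ produces a relative deviation of order $k|\tau|$ that nothing in your list cancels. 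That would only yield $|Q_k(\rho)-Q_k(z)|=O(|\tau|/k)$, hence a bound of order $\log(1/|\tau|)$ on the small zone after division by $\tau$, not $O(\delta)$. The $-\tau$ coming from $r(z)$ is cancelled precisely by the $-2\tau$ shift of $e_j$ (together with $1/C(z)=(1+\tau)/2$).

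The clean fix is to compare the polynomial factors directly: $2zI_j(z)-\tfrac12 I_j(\rho)=\tfrac12\bigl(I_j(z)-I_j(\rho)\bigr)-\tfrac{\tau^2}{2}I_j(z)=O(|\tau|^2 j)$. This uses $|z-\rho|=|\tau|^2/4$ and $I_j'(\rho)=O(j)$. It is immediate for $|z|\le\rho$ from the nonnegative coefficients; the part of $\Delta$ outside that disk needs an additional argument, which the paper's appeal to Lemma~\ref{lemme3} also glosses over. Since the factors $\tfrac12 I_j(\rho)$ are at least $\tfrac12$, the two products differ by a relative error $O(|\tau|^2k^2)=O(\delta^2)$. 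Combined with $Q_k(\rho)=O(k^{-2})$, this gives the bound $O(|\tau|^2)$ you wanted. Alternatively, simply cite the identification of $B$ from the proof of Theorem~\ref{thm:main}, as the paper does.
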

The relation between the singular term and the connection constant $K$ is derived by an exact computation of the scaling integral.
The key ideas of the proof are presented in Appendix~\ref{app:Q-analysis}.

\begin{proof}[Proof key-ideas of Theorem~\ref{thm:main}]
    Let $\Delta$ be a $\Delta$-domain at $\rho$. 
    Again define $u_k=k\tau$.
    From the complex scaling lemma (Lemma~\ref{lem:Q-scaling}), for any compact subset
    $\mathcal{K}$ of the right half-plane with $0\notin\mathcal{K}$ and uniformly for $u_k\in\mathcal{K}$,
    \[
        \Qj_k(z) = \tau^2 \Psi(u_k) + O\!\left(|\tau|^3\log\frac1{|\tau|}\right),\qquad 
            \Psi(u)=K\frac{e^u}{(e^u-1)^2},
    \]
    where $K=\lim_{k\to\infty}k^2\Qj_k(\rho)$. Moreover, for the critical value we have
    \begin{equation}
    \label{eq:Qk-rho}
        \Qj_k(\rho) = \frac{K}{k^2} + O\!\left(\frac{\log(k)}{k^{3}}\right).
    \end{equation}
    The exponential tail lemma (Lemma~\ref{lemme2}) 
    provides constants $A,c>0$ such that for all $z\in\Delta$ and all $k$ with $k\operatorname{Re}(\tau)\ge 1$,
    \begin{equation}
    \label{eq:Qk-exp-tail}
        |\Qj_k(z)| \le A|\tau|^2 e^{-c k\operatorname{Re}(\tau)}.
    \end{equation}
    Fix two numbers $\delta,M$ with $0<\delta<M$. Write
    \begin{align*}
        \frac{\Qj(\rho)-\Qj(z)}{\tau}
        = &\underbrace{\frac1\tau\sum_{k=1}^{\lfloor\delta/|\tau|\rfloor}\bigl(\Qj_k(\rho)-\Qj_k(z)\bigr)}_{S_{\text{small}}}
        + \underbrace{\frac1\tau\sum_{k=\lfloor\delta/|\tau|\rfloor+1}^{\lfloor M/|\tau|\rfloor}\bigl(\Qj_k(\rho)-\Qj_k(z)\bigr)}_{S_{\text{scale}}}\\
        &+ \underbrace{\frac1\tau\sum_{k>\lfloor M/|\tau|\rfloor}\bigl(\Qj_k(\rho)-\Qj_k(z)\bigr)}_{S_{\text{large}}}.
    \end{align*}
    For $z$ in a bounded region near $\rho$, the derivatives $\Qj_k'(z)$ are uniformly bounded (Lemma~\ref{lemme3} extends to the complex domain by Cauchy's estimate, because $\Qj_k$ are polynomials). 
    Hence there exists $M_0>0$ such that $|\Qj_k(\rho)-\Qj_k(z)|\le M_0|\rho-z| = M_0|\tau|^2/4$ for all $k\ge1$ and $z$ close to $\rho$. The number of terms is at most $\delta/|\tau|+1$, so
    \[
        |S_{\text{small}}| \le \frac1{|\tau|}\cdot\frac{\delta+|\tau|}{|\tau|}\cdot\frac{M_0}{4}|\tau|^2 = O(\delta).
    \]
    For $k>M/|\tau|$, we have $k\operatorname{Re}(\tau)\ge k\sigma|\tau| > M\sigma$ (with $\sigma>0$ from the $\Delta$-domain). Using \eqref{eq:Qk-exp-tail} and \eqref{eq:Qk-rho},
    \[
        |\Qj_k(\rho)-\Qj_k(z)| \le \frac{C}{k^2} + A|\tau|^2 e^{-c k\operatorname{Re}(\tau)}.
    \]
    Summing over $k>M/|\tau|$,
    \[
        |S_{\text{large}}| \le \frac1{|\tau|}\sum_{k>M/|\tau|}\frac{C}{k^2}
        + \frac1{|\tau|}\sum_{k>M/|\tau|} A|\tau|^2 e^{-c k\operatorname{Re}(\tau)}.
    \]
    The first sum is $O(1/M)$ because $\sum_{k>M/|\tau|}1/k^2 \sim |\tau|/M$. The second sum contains $O(1/|\tau|)$ terms each $O(|\tau|^2 e^{-c M\sigma})$, hence its contribution is $O(|\tau| e^{-c M\sigma})=o(1)$. Thus $|S_{\text{large}}|=O(1/M)$ uniformly as $\tau\to0$.
    In this region, $u_k=k\tau$ belongs to a compact set $[\delta,M]$ (up to $O(|\tau|)$). The complex estimate 
    (Lemma~\ref{lem:Q-scaling}) gives uniformly
    \[
        \Qj_k(\rho)-\Qj_k(z) = \tau^2 f(u_k) + \tau^2 \mathcal{R}_k(\tau),\qquad 
        f(u)=\frac{K}{u^2}-\Psi(u),
    \]
    with $|\mathcal{R}_k(\tau)|=O\big(|\tau|\log(1/|\tau|)\big)$. Hence
    \[
        S_{\text{scale}} = \frac1\tau\sum_{\delta<k|\tau|\le M}\bigl(\tau^2 f(u_k)+\tau^2\mathcal{R}_k(\tau)\bigr)
        = \tau\sum_{\delta<k|\tau|\le M} f(u_k) + O\!\left(M|\tau|\log\frac1{|\tau|}\right).
    \]
    The error term tends to $0$ as $\tau\to0$. The first term is a Riemann sum for the continuous function $f$ on $[\delta,M]$, so
    \[
        \lim_{\tau\to0}\tau\sum_{\delta<k|\tau|\le M} f(u_k) = \int_{\delta}^{M} f(u)\,du.
    \]
    Combining the three zones, we obtain for any fixed $\delta,M$,
    \[
        \limsup_{\tau\to0}\left|\frac{\Qj(\rho)-\Qj(z)}{\tau} - \int_{\delta}^{M} f(u)\,du\right|
        \le C_1\delta + \frac{C_2}{M},
    \]
    where $C_1,C_2$ are absolute constants. Letting $\delta\to0$ and $M\to\infty$, the integral converges because $f(u)\sim K/12$ as $u\to0$ and $f(u)\sim K/u^2$ as $u\to\infty$. Therefore the limit
    \[
        B := \lim_{z\to\rho,\,z\in\Delta} \frac{\Qj(\rho)-\Qj(z)}{\sqrt{1-4z}}
    \]
    exists and equals $\int_0^{\infty} f(u)\,du$. Consequently,
    \begin{equation}
    \label{eq:expansion}
        \Qj(z) = \Qj(\rho) - B\sqrt{1-4z} + o\bigl(\sqrt{1-4z}\bigr) \qquad (z\to\rho,\; z\in\Delta).
    \end{equation}
    The series $\Qj(z)$ is analytic in the $\Delta$-domain, and $\rho$ is its unique dominant singularity. The Expansion~\eqref{eq:expansion} is precisely of the form required by the Flajolet–Odlyzko transfer theorem (Theorem VI.3 in \cite{FS09}). Hence
    \[
        [z^n]\Qj(z) \sim \frac{B}{2\sqrt{\pi}}\,4^n n^{-3/2}.
    \]
    Catalan numbers satisfy $|\mathcal{T}_n| = \frac1{n+1}\binom{2n}{n} \sim \frac{4^n}{\sqrt{\pi}\,n^{3/2}}$. Therefore,
    \[
    \kappa = \lim_{n\to\infty}\frac{[z^n]\Qj(z)}{|\mathcal{T}_n|}
        = \frac{B/2\sqrt{\pi}}{1/\sqrt{\pi}} = \frac{B}{2}.
    \]
    This completes the proof.
\end{proof}




\section{Extension: Distribution of the number of deepest leaves}
\label{sec:generalization}

In this section, we extend our analysis to enumerate trees with exactly $2m$ leaves at their deepest level, where $m$ is any positive integer.
Our ultimate goal is to provide the complete probability distribution of the number of nodes at the maximum depth.

\subsection{Combinatorial recurrence and the scaling cascade}

Given two integers $k\in\mathbb{Z}_{\ge0}$ and $m\in\mathbb{Z}_{\ge1}$, denote by $Q_k^{[m]}(z)$ the generating function of plane binary trees of height $k$ with exactly $m$ internal nodes at the $(k-1)$th level (and thus with $2m$~leaves at the $k$th level being the deepest). 
The root decomposition implies that these $m$~internal nodes are distributed between the left and right subtrees, leading to the convolution
\begin{equation}\label{eq:Q^[m]-def}
    Q_{k+1}^{[m]}(z) = 2zI_{k-1}(z)Q_k^{[m]}(z) + z \sum_{i=1}^{m-1} Q_k^{[i]}(z) \cdot Q_k^{[m-i]}(z) \quad
    \mbox{for } k \ge 1
\end{equation}
accompanied by the initial conditions
\[
    Q_0^{[1]}(z) = 0, \qquad
    Q_1^{[1]}(z) = z, \qquad
    Q_0^{[m]}(z) = Q_1^{[m]}(z) = 0
    \quad \mbox{for } m \ge 2.
\]
The first term on the right-hand side of~\eqref{eq:Q^[m]-def} isolates the cases where all $m$ nodes belong to a~single (sufficiently deep) subtree,
matching the linear operator of case $m=1$ studied in previous sections.
The convolution sum acts as a non-homogeneous forcing term.

To capture the critical behavior of the system~\eqref{eq:Q^[m]-def}, we scale the generating functions.
The convolution structure dictates that the proper normalization scales with $m$. 
That is why we set $Q_k^{[m]}(z) = \tau^{m+1} \Psi^{[m]}(u_k)$, where $u_k = k\cdot\tau$. 
Passing to the limit as $z \to \rho^-$, we obtain a~cascade of differential equations:
\[
    \frac{d}{du}\Psi^{[m]}(u) = -\left(1 + \frac{1}{2}\phi(u)\right)\Psi^{[m]}(u) + \frac{1}{4} \sum_{i=1}^{m-1} \Psi^{[i]}(u)\Psi^{[m-i]}(u)
    \quad \mbox{for } m \ge 1.
\]
To solve this system, we factor out the solution of the homogeneous equation (which corresponds to the scaling limit of case $m=1$ described by Lemma~\ref{lem:Q-scaling}) by setting:
\[
    \Psi^{[m]}(u) = W^{[m]}(u) \frac{e^u}{(e^u - 1)^2}.
\]
Injecting this substitution into the differential equation, we see that the linear term cancels out, yielding a simplified system for the unknown functions $W^{[m]}(u)$:
\[
    \frac{d}{du} W^{[m]}(u) = \frac{1}{4} \frac{e^u}{(e^u - 1)^2} \sum_{i=1}^{m-1} W^{[i]}(u) W^{[m-i]}(u)
    \quad \mbox{for } m \ge 1.
\]
At this stage, we introduce the substitution $v(u) = \frac{-1}{e^u - 1}$.
Observing that its derivative is precisely $\frac{dv}{du} = \frac{e^u}{(e^u - 1)^2}$, we use the chain rule $\frac{d}{du} = \frac{dv}{du} \frac{d}{dv}$ to completely absorb the rational factor.
Thus, the differential system reduces to
\[
    \frac{d}{dv} W^{[m]}(v) = \frac{1}{4} \sum_{i=1}^{m-1} W^{[i]}(v) W^{[m-i]}(v)
    \quad \mbox{for } m \ge 1.
\]
Since the base case corresponds to the constant $W^{[1]}(v) = K^{[1]}$, direct induction shows that $W^{[m]}(v)$ is a polynomial in $v$ of degree $m-1$.
Consequently, the scaling limits $\Psi^{[m]}(u)$ are rational functions of $e^u$, fully determined by the successive integration constants $K^{[m]}$.







\subsection{Bivariate generating function and functional equation}

Let us combine the integration constants $K^{[m]}$ into the generating function
\[
    K(y) = \sum\limits_{m \ge 1} K^{[m]} y^m.
\]
To understand the behavior of $K(y)$ (and hence determine the constants), for every integer $k\in\mathbb{Z}_{\ge0}$, we consider the bivariate generating function of plane binary trees of height~$k$,
\[
    F_k(z, y) = \sum_{m=1}^{\infty} Q_k^{[m]}(z)\, y^m,
\]
where the marking variable $y$ serves to count pairs of leaves at the $k$th level.
Summing the discrete recurrences~\eqref{eq:Q^[m]-def} over all $m \ge 1$ yields a global discrete Riccati equation
\begin{equation}\label{eq:F-def}
    F_{k+1}(z, y) = 2zI_{k-1}(z) F_k(z, y) + z \big(F_k(z, y)\big)^2
    \quad \mbox{for } k \ge 1
\end{equation}
with the initial conditions $F_0(z,y) = 0$ and $F_1(z,y) = zy$.
Combinatorially, recurrence~\eqref{eq:F-def} is explained in the same way as~\eqref{eq:Q1-def} and~\eqref{eq:Q^[m]-def}.

The generating function $K(y)$ captures the distribution of leaves at the deepest level of a plane binary tree, as shown by the following theorem.

\begin{theorem}[Distribution of leaves at the deepest level]
\label{thm:distribution-leaves}
    For $|y|\le1$, the function $K(y)$ satisfies the relation
    \begin{equation}\label{eq:K(y)-limit}
        K(y) = \lim_{k \to \infty} k^2 F_k(\rho, y), \qquad
        \mbox{where } \rho=\frac{1}{4}.
    \end{equation}
    Its coefficients $K^{[m]}$ are related to the limit proportions $\kappa^{[m]}$ of trees that have exactly $2m$ leaves at the deepest level via the identity
    \[
        \kappa^{[m]} = \frac{K^{[m]}}{4}.
    \]
\end{theorem}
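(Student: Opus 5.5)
We split the proof into two parts. First we show that $K^{[m]} := \lim_{k\to\infty} k^2 Q_k^{[m]}(\rho)$ exists for every $m$ and that $\kappa^{[m]} = K^{[m]}/4$. Second we justify exchanging the limit in $k$ with the summation in $m$ for $|y|\le 1$, which gives \eqref{eq:K(y)-limit}. We identify the integration constant $K^{[m]}$ of the scaling cascade with this critical limit. This mirrors the case $m=1$, where Lemma~\ref{lem:Q-scaling} identifies $K=\lim k^2 Q_k(\rho)$ with the constant of the scaling profile $\Psi$.

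For the first part we argue by induction on $m$, following the scheme of Theorem~\ref{thm:main} and Corollary~\ref{cor:B-expr}. The case $m=1$ is exactly Corollary~\ref{cor:B-expr}.

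For $m\ge2$ we set $Q^{[m]}(z)=\sum_k Q_k^{[m]}(z)$. We unfold recurrence \eqref{eq:Q^[m]-def} at $z=\rho$ as a linear recurrence with forcing. Write $P_k=\prod_j (1-e_{j-1}(\rho)/2)$, which behaves like $c/k^2$ by Proposition~\ref{thm:e-asymp}. Then
\[
Q_k^{[m]}(\rho)=\sum_{j<k} \tfrac{P_k}{P_{j+1}}\,\rho\sum_i Q_j^{[i]}(\rho)Q_j^{[m-i]}(\rho).
\]
By induction the forcing is $O(j^{-4})$, so $k^2Q_k^{[m]}(\rho)$ converges to a finite limit.

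Next we repeat the three-zone argument for $Q^{[m]}(\rho)-Q^{[m]}(z)$.
\begin{itemize}
\item The small zone is controlled by bounded derivatives.
\item The large zone is controlled by an exponential tail bound analogous to Lemma~\ref{lemme2}, obtained inductively from the recurrence.
\item In the scaling zone we need a version of Lemma~\ref{lem:Q-scaling} for the $m$-th level. It states that $Q_k^{[m]}(z)=\tau^{2}\Psi^{[m]}(u_k)+o(|\tau|^2)$, with $\Psi^{[m]}(u)\sim K^{[m]}/u^2$ as $u\to0$.
\end{itemize}
We note that the normalization $\tau^{m+1}$ stated in the text must in fact be $\tau^2$ at the level of matching to $k^{-2}$; we will check this against the cascade.

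Matching the small-$u$ behaviour of $\Psi^{[m]}$ with $K^{[m]}/k^2$ produces $B^{[m]}=\int_0^\infty (K^{[m]}/u^2-\Psi^{[m]}(u))\,du$. We must then show that this integral equals $K^{[m]}/2$. This is the key algebraic step. We write $\Psi^{[m]}=W^{[m]}(v)\,v'(u)$ with $W^{[m]}$ polynomial in $v$, and integrate in $v$. Alternatively, and more robustly, we obtain the identity directly from an exact discrete relation. Summing recurrence \eqref{eq:Q^[m]-def} over $k$ gives a linear identity between $\sum_k Q_k^{[m]}$ and the tails, in the spirit of Lemma~\ref{lem:Q1=zQ2}; its singular part yields $B^{[m]}=K^{[m]}/2$. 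Transfer (Theorem VI.3 of \cite{FS09}) then gives $\kappa^{[m]}=B^{[m]}/2=K^{[m]}/4$.

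For the second part, all coefficients are nonnegative at $z=\rho$ and $|y|\le1$. Hence $|k^2F_k(\rho,y)|\le k^2F_k(\rho,1)$ and $k^2F_k(\rho,1)=k^2(\text{g.f. of height-}k\text{ trees at }\rho)=k^2(e_{k-1}(\rho)-e_k(\rho))$. By Proposition~\ref{thm:e-asymp} this is bounded; in fact it tends to $4$. Uniform tail control in $m$ also follows: $\sum_{m>M} k^2Q_k^{[m]}(\rho)$ is small uniformly in $k$. We get this from $k^2F_k(\rho,1)\to4$ together with Fatou's lemma, which forces $\sum_m K^{[m]}\le 4$. Equality in this bound comes from the probabilistic interpretation $\sum\kappa^{[m]}=1$, which requires tightness of the number of deepest leaves. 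With tightness, dominated convergence gives $\lim_k k^2F_k(\rho,y)=\sum_m K^{[m]}y^m=K(y)$.

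I expect the main obstacle to be this tightness, that is, excluding escape of mass to $m\to\infty$, together with the uniformity in $m$ of the scaling lemma. I would first try to bound the tail using $k^2F_k(\rho,y)$ for some $y>1$ slightly, via recurrence \eqref{eq:F-def}.
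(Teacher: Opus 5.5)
There is a genuine gap, and it sits where you suspect. Your reduction of \eqref{eq:K(y)-limit} is sound as far as it goes. By nonnegativity and Scheff\'e's lemma it suffices to show $\sum_m K^{[m]}=\lim_k k^2F_k(\rho,1)=4$, and Fatou only gives $\le 4$. But you then take equality from $\sum_m\kappa^{[m]}=1$. That is the same escape-of-mass problem in another guise: given $\kappa^{[m]}=K^{[m]}/4$ the two statements are equivalent, and neither is established, so \eqref{eq:K(y)-limit} remains unproved.

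The missing idea, which is essentially the whole of the paper's proof, is not to decompose in $m$ at all. The sum $A_k(z,y)=I_{k-1}(z)+F_k(z,y)$ obeys the plain Catalan iteration $A_{k+1}=1+zA_k^2$ with $A_1=1+zy$. At $z=\rho$ this gives $F_k(\rho,y)=\delta_k(0)-\delta_k(y)$, where $\delta_{k+1}=\delta_k-\delta_k^2/4$ and $\delta_1(y)=1-y/4$. Hence $F_{k+1}(\rho,y)=F_k(\rho,y)\bigl(1-\tfrac{\delta_k(0)+\delta_k(y)}{4}\bigr)=F_k(\rho,y)\bigl(1-\tfrac2k+O(\tfrac{\log k}{k^2})\bigr)$ uniformly in $y$, and $k^2F_k(\rho,y)$ converges uniformly on $|y|\le1$. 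Uniform convergence of these polynomials in $y$ yields the coefficientwise limits $K^{[m]}$ by Cauchy's formula. (Your Duhamel argument for them is correct but becomes superfluous.) Together with nonnegativity and Abel's theorem it also gives $\sum_mK^{[m]}=K(1)=4$, which is exactly the tightness you were missing. Since $\delta_1(y)$ stays in a compact part of the parabolic basin for $|y|\le R$ with some $R>1$, the same argument even gives $0\le k^2Q_k^{[m]}(\rho)\le CR^{-m}$ uniformly in $k$. That is the ``$y$ slightly larger than $1$'' bound you hoped to extract from \eqref{eq:F-def}.

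For $\kappa^{[m]}=K^{[m]}/4$, the paper's proof gives no separate argument beyond the case $m=1$, so a per-$m$ three-zone argument is the right thing to supply. Yours does not hold together yet. The level-$m$ scaling lemma and tail bound are only asserted. Worse, the ``key algebraic step'' contradicts your own (correct) remark that the window normalization is $\tau^2$ and not $\tau^{m+1}$. If $\Psi^{[m]}(u)\sim K^{[m]}/u^2$ as $u\to0$, then $W^{[m]}$ cannot be a nonconstant polynomial in $v=-1/(e^u-1)$. That would force $\Psi^{[m]}\asymp u^{-(m+1)}$ near $0$ and make $\int_0^\infty(K^{[m]}/u^2-\Psi^{[m]})\,du$ diverge. (With the paper's cascade, $W^{[2]}(v)=K^{[2]}+\tfrac14(K^{[1]})^2v$ is even negative near $u=0$.)

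What is true is the following. With $Q_k^{[i]}(z)=O(|\tau|^2)$ in the window, the convolution forcing is $O(|\tau|^4)$ per step, hence $O(|\tau|^3)$ over the $O(1/|\tau|)$ steps. So $\Psi^{[m]}=K^{[m]}e^u/(e^u-1)^2$, and $B^{[m]}=K^{[m]}/2$ is literally the integral of Corollary~\ref{cor:B-expr}. The clean way to prove this again goes through $A_k$. Set $\tilde e_k:=C-A_k$, which satisfies the same recurrence \eqref{eq:e-recur} as $e_k$. Then $F_k=e_{k-1}-\tilde e_k$ and $F_{k+1}=F_k\bigl(1-\tau-z(e_{k-1}+\tilde e_k)\bigr)$. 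So the product argument of Lemma~\ref{lem:Q-scaling} applies to $F_k(z,y)$ with $K$ replaced by $K(y)$, for all $m$ at once.

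Your ``alternative'' exact relation is not a shortcut. Summing \eqref{eq:Q^[m]-def} over $k$ gives $\tau Q^{[m]}(z)=Q_1^{[m]}(z)-2z\sum_k e_{k-1}(z)Q_k^{[m]}(z)+z\sum_k\sum_i Q_k^{[i]}(z)Q_k^{[m-i]}(z)$. Extracting $B^{[m]}$ from it requires the $\tau^2$-order singular behaviour of the right-hand sums, which needs the same scaling analysis at finer precision.
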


To derive Theorem~\ref{thm:distribution-leaves}, we first prove the existence of the limit on the right-hand side of~\eqref{eq:K(y)-limit}.

We reveal that $K(y)$ is a direct affine transformation of the Fatou coordinate:
    \[
        K(y) = 4 \Big( \Phi\left(1 - \frac{y}{4}\right) - \Phi(1) \Big), \qquad \text{with } \Phi\left(x - \frac{x^2}{4}\right) = \Phi(x) + 1.
    \]
This establishes the fundamental functional equation governing the complete probability distribution:
$K((1 + y/4)^2) = K(y) + 4$.
Solving the equation $y = \left(1 + \frac{y}{4}\right)^2$ yields a unique parabolic fixed point at $\rho_K = 4$. 

Then we establish the singular expansion $K(y) \sim \frac{16}{1 - y/4}$, 
followed by the exact asymptotic behavior of the coefficients, $K^{[m]} \sim 16 \cdot 4^{-m}$,
via the standard singularity analysis (that is, Flajolet–Odlyzko transfer theorem).

From this result, we deduce the asymptotic behavior of the proportions $\kappa^{[m]}$, which happens to be
\(
    \kappa^{[m]} \sim 4^{-m+1}
\)
as $m\to\infty$.

Furthermore, we can deduce the mean and variance of the number of leaves $L$ at the last level:
$\mathbb{E}[L] = 4\kappa^{[1]}$ and $\operatorname{Var}(L) = 32\kappa^{[2]} + 4\kappa^{[1]} (1-4\kappa^{[1]})$.

The key details are provided in Appendix~\ref{app:generalization}.

\section{Numerical values for the limit distribution}
\label{sec:numerical_values}

To obtain exact values of the proportions $\kappa^{[m]}$, it is necessary to numerically evaluate the integration constants $K^{[m]} = \lim_{k \to \infty} k^2 Q_k^{[m]}(\rho)$ by simulating the exact discrete recurrences at the critical point $z=\rho$.
However, as shown in our asymptotic analysis, the scaling error decays very slowly, heavily influenced by logarithmic inertia.
This implies that a naive iteration would require an unattainable number of steps to achieve strict convergence.

To overcome this theoretical obstacle, we implemented an optimized iterative algorithm simulating the full convolution cascade up to $N = 10^7$ iterations.
To accelerate the slow convergence, we applied a first-order Richardson extrapolation (detailed, for example, in~\cite{sidi2003practical}) on the tail of the evaluated sequence, which effectively cancels the dominant polynomial error terms $O(1/N)$. 
Moreover, to avoid catastrophic cancellations and the accumulation of floating-point errors over millions of non-linear recursive convolutions, the simulation strictly uses extended precision arithmetic.

The high-precision numerical scheme yields the following values for the limit probability distribution $\kappa^{[m]}$ of finding exactly $2m$ leaves at the deepest level.

\begin{table}[h]
    \centering
    \scalebox{0.8}{
        \begin{tabular}{c c c c}
            \hline
            \textbf{Number of leaves ($2m$)} & \textbf{Number of pairs ($m$)} & \textbf{Constant $K^{[m]}$} & \textbf{Probability $\kappa^{[m]}$} \\
            \hline
            2 & 1 & 2.8037088 & 0.7009272 \\
            4 & 2 & 0.8902510 & 0.2225627 \\
            6 & 3 & 0.2273992 & 0.0568498 \\
            8 & 4 & 0.0588008 & 0.0147002 \\
            10 & 5 & 0.0148223 & 0.0037055 \\
            \hline
        \end{tabular}
    }
    \caption{Asymptotic probability distribution of the number of leaves at the deepest level.}
    \label{tab:distribution}
\end{table}

These evaluations offer a geometric perspective on the fringes of large random trees.
The sum of the first five probabilities alone accounts for approximately $99.87\%$ of the distribution, illustrating a massive concentration of the minimal configurations.
On the one hand, the value $\kappa^{[1]} \approx 0.7009$ signifies that, asymptotically, about $70\%$ of plane binary trees possess exactly two leaves at the deepest level.
On the other hand, by exploiting the proportionality relation established above, we can see that the asymptotic expectation of the total number of nodes at the maximum depth is $\hat{\kappa} = 4\kappa^{[1]} \approx 2.8037$.

Furthermore, these numerical results empirically validate the analysis conducted in Section~\ref{sec:generalization}.
The successive ratios between consecutive probabilities ($\kappa^{[2]}/\kappa^{[1]} \approx 0.317$, $\kappa^{[3]}/ \kappa^{[2]} \approx 0.255$, $\kappa^{[4]}/ \kappa^{[3]} \approx 0.258$, and $\kappa^{[5]}/ \kappa^{[4]} \approx 0.252$) converge rapidly towards $0.25$, reflecting the theoretical exponential decay $\kappa^{[m]} \sim 4^{-m+1}$.

\section{Discussion and perspectives}
\label{sec:discussion}

As we have seen, the problems studied in this paper exhibit a rigid analytic structure. 
Both the minimal configuration and the complete distribution of the deepest level are governed by the same underlying mechanism: a quasi-logistic recurrence whose critical regime is controlled by a parabolic fixed point.
This leads systematically to a square-root singular expansion at $\rho = 1/4$ and to a scaling limit described by a dissipative differential equation.
At a deeper stage, the entire distribution is encoded by a discrete Riccati cascade that can be linearized through the Abel equation, revealing a macroscopic functional equation and a universal exponential decay.

A main conclusion of this work is that the described methodology is not specific to binary trees. 
The combination of truncation analysis, discrete integrating factors, and continuous iteration theory forms a toolkit that can be adapted to a wide range of \emph{simple} tree families~\cite{Drmota2009}.
Here, the same phenomenology is expected to persist: a critical scaling window, a logistic-type dynamics, and a limit distribution governed by a functional equation of Abel type.

However, this apparent universality raises a more fundamental question. 
All these models belong to the class of \emph{simply generated trees}, whose local structure is essentially driven by independent branching mechanisms.
In contrast, entirely different behavior is expected for \emph{increasing trees}, in the sense of~\cite{BFS92}, where labels impose strong global constraints and introduce long-range dependencies in the structure.

This observation leads to the following open problem suggested by our work:
\begin{center}
    \emph{What is the distribution of the deepest level in families of increasing trees?}
\end{center}

In such models (e.g., recursive trees, binary increasing trees, or, more generally, increasing tree families) further extended by increasing trees with repetitions~\cite{BGGW20,BGMN22},
the height is typically logarithmic and the profile is known to exhibit fundamentally different fluctuations.
The quasi-logistic framework developed here no longer applies directly, as the independence structure underlying the Catalan recursion breaks down.
In particular, the truncation error is no longer governed by a simple Riccati equation, and the emergence of a scaling function driven by a local differential equation is far from guaranteed.

Understanding the extreme fringe of increasing trees would, therefore, require new tools, possibly combining analytic combinatorics with probabilistic techniques (such as martingale methods or branching processes with memory).
Whether an analog of the functional equation for the generating function $K(y)$ still exists in this setting, or whether a completely different universality class emerges, remains widely open.

Finally, even within the Catalan framework, a finer question persists. 
Although the functional equation completely characterizes the distribution, the explicit evaluation of the integration constants~$K^{[m]}$ still relies on numerical methods. 
Finding a direct analytic or combinatorial expression for these constants would provide a deeper understanding of the underlying structure and might serve as a bridge toward more constrained models such as increasing trees.

In summary, while the present work essentially closes the case of simply generated trees with respect to the study of the deepest fringe, it opens a new direction: the study of extreme levels in structured random trees, where global constraints reshape the local geometry.

\newpage

\bibliography{BGN}

\newpage

\appendix

\section{Appendix related to Section~\ref{sec:analysis}}
\label{app:analysis}

\begin{proof}[Proof of Proposition~\ref{thm:e-asymp}]
    At the critical point $z=\rho=1/4$, we have $r(\rho)=1$ and relation~\eqref{eq:e-recur} becomes $e_{k+1}(\rho)=e_k(\rho)-\rho e_k(\rho)^2$. 
    From the combinatorial meaning, it follows that the sequence $\big(e_k(\rho)\big)$ is strictly positive and decreasing to $0$.
    Introducing $v_k = 1/e_k(\rho)$, we get:
    \begin{equation}
    \label{eq:v_k(rho)-dif}
        v_{k+1} - v_k
        = 
        \frac{1}{e_k(\rho)\big(1 - \rho e_k(\rho)\big)} - \frac{1}{e_k(\rho)}
        = 
        \frac{\rho}{1 - \rho e_k(\rho)}.
    \end{equation}
    Since $e_k(\rho) > 0$, we have $1 - \rho e_k(\rho) < 1$, which immediately implies that $v_{k+1} - v_k > \rho$ for all $k \ge 0$.
    Summing the latter inequality from $0$ to $k-1$, we obtain $v_k > v_0 + \rho k$, which unconditionally shows that $e_k(\rho) = O(1/k)$.

    Asymptotically expanding difference~\eqref{eq:v_k(rho)-dif}, we have:
    \[
        v_{k+1} - v_k = \rho + \rho^2 e_k(\rho) + O\big(e_k(\rho)^2\big).
    \]
    Thus, summing this relation from $0$ to $k-1$, we obtain:
    \[
        v_k= v_0 + \rho k + \rho^2\sum_{i=0}^{k-1} e_i(\rho) + O\left(\sum_{i=0}^{k-1} e_i(\rho)^2\right).
    \]
    We can then use our first estimate $e_k(\rho) = O(1/k)$ to refine the sum via a bootstrapping argument.
    By injecting this bound back, the series $\sum_{i=1}^{\infty} e_i(\rho)^2$ converges, and the partial sum satisfies $\sum_{i=1}^{k-1} e_i(\rho) = O(\log k)$. 
    Thus, $v_k = \rho k + O(\log k)$. 
    Inverting this relation via $e_k(\rho) = 1/v_k$, the asymptotic expansion $e_k(\rho) = \frac{4}{k} + O\left(\frac{\log k}{k^2}\right)$ follows.
\end{proof}


\begin{proof}[Proof of Proposition~\ref{prop:e-scaling-complex}]
    Define $v_k(z) = 1/e_k(z)$.
    From $e_{k+1}(z) = (1-\tau)e_k(z) - z e_k^2(z)$, we obtain
    \begin{equation}
    \label{eq:v_k-recur}
        v_{k+1}(z) = \frac{v_k(z)}{1-\tau - z/v_k(z)}.
    \end{equation}

    Because the generating series $e_k(z)$ has nonnegative coefficients, for $|z|\le\rho$ we have $|e_k(z)|\le e_k(|z|)\le e_k(\rho)$. Hence, $|v_k(z)|\ge v_k(\rho)$.  
    At the critical point $\rho$, it is known that $v_k(\rho) \sim k/4$ (cf. Proposition~\ref{thm:e-asymp}). Thus, there exists $c_1>0$ such that for sufficiently large $k$,
    \begin{equation}
    \label{eq:lower-bound-v_k}
        |v_k(z)| \ge c_1 k \qquad \text{for } z\in\Delta.
    \end{equation}
    In the scaling regime where $u_k = k\tau\in\mathcal{K}$, we have $k = \Theta(1/|\tau|)$, and consequently,
    \begin{equation}
    \label{eq:lower-bound-v_k-scaling}
        |v_k(z)| = \Omega(1/|\tau|). 
    \end{equation}

    Let us rewrite recurrence~\eqref{eq:v_k-recur} as $v_{k+1}(z)=v_k(z)/(1-x_k)$ with $x_k = \tau + z/v_k(z)$.
    From equation~\eqref{eq:lower-bound-v_k} and $k=\Theta(1/|\tau|)$, we have $|x_k| = O(|\tau|)$. For $|\tau|$ small enough, $|x_k|<1$ in $\Delta$ (this holds uniformly because $\arg(\tau)$ is bounded away from $\pm\pi/2$). Hence, we can expand
    \[
        \frac{1}{1-x_k} = 1 + x_k + x_k^2 + O(|x_k|^3).
    \]
    Thus, $v_{k+1}(z) = v_k(z) + v_k(z) x_k + v_k(z) x_k^2 + O(|v_k(z)x_k^3|)$.
    Now, we have $v_k(z) x_k = \tau v_k(z) + z$ and $v_k(z) x_k^2 = \tau^2 v_k(z) + 2\tau z + \frac{z^2}{v_k(z)}$.
    Therefore,
    \begin{equation*}
        v_{k+1}(z) = (1+\tau)v_k(z) + z + \tau^2 v_k(z) + 2\tau z + \frac{z^2}{v_k(z)}
            + O\!\left(|\tau|^3 |v_k(z)| + \frac{|\tau|}{|v_k(z)|} + \frac{1}{|v_k(z)|^2}\right). 
    \end{equation*}
    Recall that $z = (1-\tau^2)/4$. Using equation~\eqref{eq:lower-bound-v_k-scaling}, 
    we have $\tau^2 v_k(z) = O(|\tau|)$, $2\tau z = O(|\tau|)$, $z^2/v_k(z) = O(1/|v_k(z)|)=O(|\tau|)$. 
    The error terms are, respectively, $O(|\tau|^3|v_k(z)|)=O(|\tau|^2)$, $|\tau|/|v_k(z)| = O(|\tau|^2)$, and 
    $1/|v_k(z)|^2 = O(|\tau|^2)$. Hence, we can write
    \begin{equation}
    \label{eq:v_k-expansion-scaling}
        v_{k+1}(z) = (1+\tau)v_k(z) + \frac14 + R_k,
    \end{equation}
    where the remainder $R_k$ satisfies
    \[
        R_k = O\!\left(|\tau|^2 |v_k(z)| + \frac{1}{k}\right).
    \]
    The $1/k$ term absorbs all contributions of order $|\tau|$, because $k = \Theta(1/|\tau|)$.
    Multiply equation~\eqref{eq:v_k-expansion-scaling} by the integrating factor $(1+\tau)^{-(k+1)}$ and sum from $j=1$ to $k-1$:
    \begin{equation}
    \label{eq:v_k-sum}
        v_k(z) = (1+\tau)^{k-1}v_1(z) + \sum_{j=1}^{k-1} (1+\tau)^{k-1-j}\left(\frac14 + R_j\right).
    \end{equation}
    The geometric sum gives
    \[
        \sum_{j=1}^{k-1} (1+\tau)^{k-1-j}\cdot\frac14  = \frac14\cdot\frac{(1+\tau)^{k-1}-1}{\tau}.
    \]
    Since $u_k = k\tau$ is bounded, we have $(1+\tau)^k = e^{k\log(1+\tau)} = e^{u_k + O(|\tau|^2 k)} = e^{u_k}(1+O(|\tau|))$. 
    Hence
    \begin{equation}
    \label{eq:geometric-sum}
        \frac{(1+\tau)^{k-1}-1}{\tau} = \frac{e^{u_k}-1}{\tau} + O(1).
    \end{equation}
    Since $R_j = O(|\tau|^2|v_j|) + O(1/j)$, we first bound the $1/j$ part:
    \[
    \sum_{j=1}^{k-1} (1+\tau)^{k-1-j}\cdot\frac{C}{j}
    \le C|1+\tau|^{k-1}\sum_{j=1}^{k-1}\frac1j
    = O\!\left(e^{\operatorname{Re}(u_k)}\log k\right) = O\!\left(\log\frac1{|\tau|}\right),
    \]
    because $\operatorname{Re}(u_k)$ is bounded (compact set) and $k = O(1/|\tau|)$.
    We then use a bootstrap argument. A first rough estimate from equation~\eqref{eq:v_k-sum}
    ignoring $R_j$ gives $v_k(z) = O(1/|\tau|)$. Substituting this into $R_j$ yields
    \[
        \sum_{j=1}^{k-1} (1+\tau)^{k-1-j}\cdot O(|\tau|^2|v_j(z)|)
            = O\!\left(|\tau|\cdot\frac{(1+\tau)^{k-1}-1}{\tau}\right) = O(1).
    \]
    Collecting all contributions and using equation~\eqref{eq:geometric-sum}, we obtain
    \begin{equation}
    \label{eq:v_k-final}
         v_k(z) = \frac{e^{u_k}-1}{4\tau} + O\!\left(\log\frac1{|\tau|}\right).
    \end{equation}
    
    Since $u_k$ lies in a compact set where $\operatorname{Re}(u_k)>0$ and $|\arg(u_k)|\le\varphi<\pi/2$, the quantity $e^{u_k}-1$ is bounded away from $0$ (its modulus has a positive lower bound). Hence from equation~\eqref{eq:v_k-final},
    \[
        e_k(z) = \frac{1}{v_k(z)}
            = \frac{4\tau}{e^{u_k}-1 + O\!\left(|\tau|\log\frac1{|\tau|}\right)}
            = \frac{4\tau}{e^{u_k}-1}\left(1 + O\!\left(|\tau|\log\frac1{|\tau|}\right)\right).
    \]
    Thus,
    \[
        e_k(z) = \tau\,\phi(u_k) + O\!\left(|\tau|^2 \log\frac1{|\tau|}\right),
        \qquad \phi(u)=\frac{4}{e^u-1}.
    \]
    All constants depend only on the compact set and on the geometry of $\Delta$, so the error term is uniform for $z$ in the $\Delta$-domain as $\tau\to0$. This completes the proof.
\end{proof}

 \section{Appendix related to Section~\ref{sec:Q-analysis}}
 \label{app:Q-analysis}

\begin{proof}[Proof of Proposition~\ref{lem:Q-scaling}]
  Recall from the definition of the generating function $Q_k(z)$ that $Q_1(z) = z$ and $Q_{k+1}(z) = 2z I_{k-1}(z) Q_k(z)$. By induction, we obtain the product form:
\begin{equation}
\label{eq:prod_Qk}
    Q_k(z) = z \prod_{j=1}^{k-1} 2z I_{j-1}(z).
\end{equation}
  Using the relation $2z I_j(z) = 2z\big(C(z) - e_j(z)\big) = 1 - \tau - 2z e_j(z)$, we can rewrite the terms of the product.
  We consider $z$ tending to $\rho$, such that $\tau$ is small.
  According to Proposition~\ref{prop:e-scaling-complex}, for $j$ in the scaling zone, the error behaves as $e_j(z) = \tau \phi(j\tau) + O\big(|\tau|^2 \log|1/\tau|\big)$ with $\phi(u) = \frac{4}{e^u - 1}$.
  Substituting this into the product terms and noting that $2z = \frac{1}{2} + O(\tau^2)$, we have:
\begin{align*}
    2z I_{j-1}(z) &= 1 - \tau - 2z \left( \tau \phi\big((j-1)\tau\big) + O\left(|\tau|^2 \log \dfrac{1}{|\tau|} \right) \right) \\
    &= 1 - \tau \left( 1 + \frac{1}{2} \phi(j\tau) \right) + O\left(|\tau|^2 \log \dfrac{1}{|\tau|} \right).
\end{align*}
  Applying the logarithm to equation\eqref{eq:prod_Qk} and using $\log(1-x) = -x + O(x^2)$:
\begin{equation}
\label{eq:log_Qk}
    \log Q_k(z) = \log z - \tau \sum_{j=1}^{k-1} \left( 1 + \frac{2}{e^{j\tau}-1} \right) + O\left(|\tau| \log \dfrac{1}{|\tau|} \right). 
\end{equation}
  The sum in~\eqref{eq:log_Qk} is a Riemann sum for the integral of the function $f(s) = 1 + \frac{2}{e^s-1}$. 
  Let again $u_k = k\tau$.
  As $\tau \to 0$:
\[
    \tau \sum_{j=1}^{k-1} \left( 1 + \frac{2}{e^{j\tau}-1} \right) \sim \int_{\varepsilon}^{u_k} \left( 1 + \frac{2}{e^s-1} \right) ds.
\]
  The primitive is given by 
\[
    \int \left(1 + \frac{2}{e^s-1}\right) ds = \int \frac{e^s+1}{e^s-1} ds = 2\log(e^s-1) - s + C.
\]
  Taking the exponential of the negative integral, we find that the dominant behavior of $Q_k(z)$ is proportional to the scaling function $\Psi(u_k)$:
\[
    \exp\big( s - 2\log(e^s-1) \big) = \frac{e^u_k}{(e^u_k-1)^2}.
\]
  To determine the multiplicative constant, we compare $Q_k(z)$ with the critical value $Q_k(\rho)$. 
  From Proposition~\ref{prop:e-scaling-complex}, $Q_k(\rho) \sim K/k^2$.
  In the limit $z \to \rho$ (i.e., $\tau \to 0$ and $u_k \to 0$), the scaling form must satisfy:
\[
    \tau^2 \Psi(u_k) = \tau^2 K \frac{e^{u_k}}{(e^{u_k}-1)^2} \xrightarrow{u_k \to 0} \tau^2 \frac{K}{u_k^2} 
        = \tau^2 \frac{K}{(k\tau)^2} = \frac{K}{k^2}.
\]
  This matches the asymptotic behavior of $Q_k(\rho)$.
  Thus, $Q_k(z) = \tau^2 \Psi(u_k) + O\big(|\tau|^3 \log |1/\tau|\big)$. 
  Subtracting this from the expansion of $Q_k(\rho) = \frac{K}{k^2} + \dots = \tau^2 \frac{K}{u_k^2} + \dots$ yields the desired result:
\[
    Q_k(\rho) - Q_k(z) = \tau^2 \left( \frac{K}{u_k^2} - \Psi(u_k) \right) + O\left(|\tau|^3 \log \frac{1}{|\tau|}\right). \qedhere
\]
\end{proof}

\begin{lemma}[Exponential tail in the $\Delta$-domain]
\label{lemme2}
Let $\Delta$ be a $\Delta$-domain at $\rho=1/4$. There exist constants $A>0$, $c>0$, and $\tau_0>0$ such that for all $z\in\Delta$ with $0<|\tau|<\tau_0$ (where $\tau=\sqrt{1-4z}$) and for all integers $k$ satisfying $k\,\operatorname{Re}(\tau)\ge 1$, we have
\[
|\Qone_k(z)| \le A\,|\tau|^2\,e^{-c\,k\operatorname{Re}(\tau)}.
\]
\end{lemma}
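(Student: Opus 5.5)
We will work from the product representation $\Qone_k(z)=z\prod_{j=1}^{k-1}2zI_{j-1}(z)$ and bound each factor. Write $2zI_{j}(z)=1-\tau-2ze_j(z)$. The plan is to show two things. First, at the scaling entry index $k_0:=\lceil 1/\operatorname{Re}(\tau)\rceil$ we have $|\Qone_{k_0}(z)|=O(|\tau|^2)$. Second, for every $j\ge k_0$ we have $|2zI_{j-1}(z)|\le 1-c\operatorname{Re}(\tau)$ for some $c>0$ depending only on the opening angle of $\Delta$. Multiplying the second bound over $j=k_0,\dots,k-1$ gives $|\Qone_k(z)|\le|\Qone_{k_0}(z)|\,e^{-c(k-k_0)\operatorname{Re}(\tau)}$. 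Since $k_0\operatorname{Re}(\tau)\le 2$ for small $\tau$, this is at most $A|\tau|^2e^{-ck\operatorname{Re}(\tau)}$, which is the claim.

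For the first step, note that $u_{k_0}=k_0\tau$ has real part in $[1,1+\operatorname{Re}(\tau)]$. In the $\Delta$-domain $|\arg\tau|\le(\pi-\theta)/2<\pi/2$, so $|u_{k_0}|\le 2/\cos((\pi-\theta)/2)$. Hence $u_{k_0}$ ranges over a fixed compact subset $\mathcal K$ of the right half-plane, and Lemma~\ref{lem:Q-scaling} (in the form $\Qone_k(z)=\tau^2\Psi(u_k)+O(|\tau|^3\log(1/|\tau|))$ used in the proof of Theorem~\ref{thm:main}) gives $|\Qone_{k_0}(z)|\le(\sup_{\mathcal K}|\Psi|+1)|\tau|^2$.

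For the second step, the key point is that the correction $-2ze_j(z)$ does not destroy the contraction $1-\tau$. We split the indices into two ranges.
\begin{itemize}
\item In a bounded window $k_0\le j\le Mk_0$, Proposition~\ref{prop:e-scaling-complex} gives $e_j(z)=\tau\phi(u_j)+O(|\tau|^2\log(1/|\tau|))$. Then $2zI_{j-1}=1-\tau\bigl(1+\tfrac12\phi(u_{j-1})\bigr)+o(|\tau|)$. A direct computation shows $\operatorname{Re}\bigl(\tau(1+\tfrac12\phi(u))\bigr)=\operatorname{Re}\bigl(\tau\coth(u/2)\bigr)$, with $u=j\tau$ on the ray of $\tau$. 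It therefore suffices to check that $\operatorname{Re}\bigl(\coth(u/2)\,\tau/|\tau|\bigr)\ge c'>0$ for $u$ on that ray with $\operatorname{Re}u\ge1$. I expect this to hold because $\coth(u/2)\to1$ and the argument of $\tau$ is bounded away from $\pm\pi/2$, but the cone geometry has to be checked.
\item For $j\ge Mk_0$, I would avoid the scaling asymptotics altogether and bound $e_j(z)$ directly. Iterating \eqref{eq:e-recur} in the form $e_{j+1}=e_j\,(1-\tau-ze_j)$ and arguing by induction, once $|e_j|\le\eta|\tau|$ with $\eta$ small, each step multiplies $|e_j|$ by $|1-\tau|+O(\eta|\tau|)\le 1-\tfrac12\operatorname{Re}(\tau)$. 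So $|e_j(z)|$ decays geometrically and stays below $\eta|\tau|$. The starting smallness at $j=Mk_0$ comes from Proposition~\ref{prop:e-scaling-complex}, since $|\phi(u)|\approx4e^{-\operatorname{Re}u}$ is small for $\operatorname{Re}u\ge M$. With this, $|2zI_{j-1}|\le|1-\tau|+\eta|\tau|\le1-c\operatorname{Re}(\tau)$, using $|1-\tau|\le1-\operatorname{Re}(\tau)+O(|\tau|^2)$ and $|\tau|\le\operatorname{Re}(\tau)/\sin(\theta/2)$, once $\eta$ is chosen small relative to $\sin(\theta/2)$.
\end{itemize}

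The main obstacle is uniformity in the first range: Proposition~\ref{prop:e-scaling-complex} is stated only on compact sets of $u$, which forces the two-range split. A second difficulty is positivity of $\operatorname{Re}(\tau\coth(u/2))$ near $\operatorname{Re}u=1$ when $\arg\tau$ is close to the boundary of $\Delta$. If that positivity fails there, I would enlarge the threshold to $k\operatorname{Re}(\tau)\ge M$. On the finite stretch $1\le k\operatorname{Re}(\tau)\le M$, the bound $|\Qone_k|=O(|\tau|^2)$ still follows from Lemma~\ref{lem:Q-scaling} on a compact set, and the factor $e^{-ck\operatorname{Re}\tau}$ is bounded below there, so it is absorbed into $A$.
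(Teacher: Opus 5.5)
Your proof is correct provided you commit to the fallback in your last paragraph, and you do need it: the positivity you were unsure about genuinely fails. Write $\tau=|\tau|e^{i\alpha}$ and $u=s\tau=a+ib$, so that $b=a\tan\alpha$. Then
\[
\operatorname{Re}\bigl(e^{i\alpha}\coth(u/2)\bigr)=\frac{\cos\alpha\,\sinh a+\sin\alpha\,\sin b}{\cosh a-\cos b}.
\]
Take $\tan\alpha=10$, which is admissible in $\Delta$ once $\theta<\pi-2\arctan 10\approx 0.2$, and $a=1$. The numerator is about $0.117-0.541<0$. Hence $\operatorname{Re}(\tau\coth(u/2))\approx-0.18\,|\tau|$, and $|2zI_{j-1}(z)|\approx 1+0.18|\tau|$ for $j\operatorname{Re}\tau$ near $1$, so there is no contraction there.

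So drop the bounded-window range entirely and argue as follows.
\begin{enumerate}
\item Pick $M$ with $8e^{-(M-1)}$ small compared with $\sin(\theta/2)$.
\item Start your induction on \eqref{eq:e-recur} at $j=\lceil M/\operatorname{Re}\tau\rceil-1$. There Proposition~\ref{prop:e-scaling-complex}, applied on a compact set, gives $|e_j|\le\eta|\tau|$.
\item Deduce $|2zI_j(z)|\le 1-c\operatorname{Re}\tau$ for all later $j$.
\item Handle $1\le k\operatorname{Re}\tau\le M+1$ by Lemma~\ref{lem:Q-scaling} on the compact set $\{1\le\operatorname{Re}u\le M+1,\ |u|\le (M+1)/\sin(\theta/2)\}$, absorbing $e^{c(M+1)}$ into $A$.
\end{enumerate}

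Your skeleton is the same as the paper's:
\begin{itemize}
\item an entry index $k_0\asymp 1/|\tau|$;
\item $|\Qone_{k_0}(z)|=O(|\tau|^2)$ from Lemma~\ref{lem:Q-scaling};
\item a per-factor contraction;
\item a telescoping product.
\end{itemize}
The paper, however, obtains the contraction in one stroke. It applies Proposition~\ref{prop:e-scaling-complex} to every $k\ge k_0$ to get $|e_k(z)|\le C_1|\tau|e^{-k\operatorname{Re}\tau}$. It then absorbs $C_2|\tau|e^{-k\operatorname{Re}\tau}$ into $|1-\tau|\le e^{-c_0\operatorname{Re}\tau}$ ``for $|\tau|$ small''.

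Your version is more careful at exactly the two points where that argument is thin.
\begin{itemize}
\item The proposition is uniform only for $u$ in compact sets, and its additive $O(|\tau|^2\log(1/|\tau|))$ error does not decay with $k$. Your direct induction on the $e$-recurrence avoids this.
\item The absorption really requires $C_2e^{-k\operatorname{Re}\tau}\ll\sin(\theta/2)$, i.e.\ $k\operatorname{Re}\tau$ large rather than $|\tau|$ small. That is exactly your threshold-$M$ device, and the example above shows it cannot be bypassed in thin $\Delta$-domains.
\end{itemize}
The paper's route is shorter; yours uses each estimate only where it is actually established.
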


\begin{proof}
We work in a fixed $\Delta$-domain, so there exists $\theta>0$ such that $|\arg(\tau)|\le \pi/2-\theta$; consequently $\operatorname{Re}(\tau)\ge \sigma|\tau|$ for some $\sigma>0$.

Fix a constant $\delta>0$ (e.g. $\delta=1$) and set $k_0 = \lfloor 1/(\delta|\tau|)\rfloor$. Then $u_{k_0}=k_0\tau$ lies in a~compact subset of the right half‑plane (bounded away from $0$ and $\infty$). By the complex scaling lemma (Lemma~\ref{lem:Q-scaling}), there exists a constant $M>0$ (depending only on $\delta$) such that for all sufficiently small $|\tau|$,
\begin{equation*}
    |\Qone_{k_0}(z)| \le M\,|\tau|^2.
\end{equation*}

For any $k\ge k_0$, we have $\operatorname{Re}(u_k)=k\operatorname{Re}(\tau)\ge 1$. From the complex scaling of $e_k(z)$ (Proposition~\ref{prop:e-scaling-complex}), we obtain
\[
    e_k(z) = \tau\phi(u_k) + O\!\left(|\tau|^2\log\frac1{|\tau|}\right),\qquad 
    \phi(u)=\frac{4}{e^{u}-1}.
\]
For $\operatorname{Re}(u)\ge 1$, we have $|e^{u}-1|\ge e^{\operatorname{Re}(u)}-1\ge \tfrac12 e^{\operatorname{Re}(u)}$, hence $|\phi(u)|\le 8e^{-\operatorname{Re}(u)}$. Therefore, for $k\ge k_0$ and $|\tau|$ sufficiently small,
\begin{equation}
\label{eq:bound_ek}
    |e_k(z)| \le C_1\,|\tau|\,e^{-k\operatorname{Re}(\tau)},
\end{equation}
where $C_1$ is an absolute constant.
Recall that $I_k(z)=C(z)-e_k(z)$ and $2z C(z)=1-\tau$. Thus
\[
    2z I_k(z) = 1-\tau - 2z e_k(z).
\]
For $z$ near $\rho$, $|2z|\le 1+O(|\tau|)$. Using equation~\eqref{eq:bound_ek}, we get
\[
    |2z e_k(z)| \le C_2\,|\tau|\,e^{-k\operatorname{Re}(\tau)}.
\]
Moreover, $|1-\tau| = 1 - \operatorname{Re}(\tau) + O(|\tau|^2)$. Since $\operatorname{Re}(\tau)\le |\tau|$, there exists $c_0>0$ (e.g. $c_0=1/2$) such that for all $|\tau|$ small enough,
\[
    |1-\tau| \le e^{-c_0\operatorname{Re}(\tau)}.
\]
Consequently, for every $k\ge k_0$,
\begin{equation*}
    |2z I_k(z)| \le e^{-c_0\operatorname{Re}(\tau)} + C_2|\tau| e^{-k\operatorname{Re}(\tau)}
    \le e^{-c\operatorname{Re}(\tau)},
\end{equation*}
with $c=c_0/2$, provided $|\tau|$ is so small that $C_2|\tau|\le e^{-c_0\operatorname{Re}(\tau)/2}$ (which holds uniformly because $\operatorname{Re}(\tau)\ge \sigma|\tau|$).

For $k\ge k_0$, we iterate the recurrence $\Qone_{j+1}(z)=2z I_j(z)\Qone_j(z)$:
\[
    |\Qone_k(z)| \le |\Qone_{k_0}(z)|\prod_{j=k_0}^{k-1}|2z I_j(z)|
    \le M|\tau|^2 \bigl(e^{-c\operatorname{Re}(\tau)}\bigr)^{k-k_0}
    = M|\tau|^2 e^{-c(k-k_0)\operatorname{Re}(\tau)}.
\]
Now $k_0\operatorname{Re}(\tau) \le k_0|\tau| \le 1/\delta+1$, which is bounded. Therefore, $e^{c k_0\operatorname{Re}(\tau)}$ is bounded by a~constant~$C_3$. Hence,
\[
    |\Qone_k(z)| \le M C_3 |\tau|^2 e^{-c k\operatorname{Re}(\tau)}.
\]
Setting $A = M C_3$ completes the proof.
\end{proof}

\begin{lemma}[Uniform bound on derivatives]
\label{lemme3} 
    There exists a constant $M > 0$ such that for all $k \ge 1$ and $z \in (0, \rho]$, $0 \le \Qjpr_k(z) \le M$.
\end{lemma}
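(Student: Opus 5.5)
Nonnegativity is immediate: each $\Qone_k$ is a polynomial with nonnegative coefficients, so $\Qjpr_k$ also has nonnegative coefficients and is $\ge0$ on $(0,\rho]$. Since $\Qjpr_k$ is then nondecreasing on $[0,\rho]$, it suffices to bound $\Qjpr_k(\rho)$ uniformly in $k$. Everything below is evaluated at $z=\rho$.

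I will use logarithmic differentiation of the product form~\eqref{eq:prod_Qk}, $\Qone_k(z)=z\prod_{j=1}^{k-1}2zI_{j-1}(z)$. This gives
\[
\frac{\Qjpr_k(z)}{\Qone_k(z)}=\frac{1}{z}+\sum_{j=1}^{k-1}\Bigl(\frac1z+\frac{I_{j-1}'(z)}{I_{j-1}(z)}\Bigr)
=\frac{k}{z}+\sum_{j=0}^{k-2}\frac{I_j'(z)}{I_j(z)} .
\]
By \eqref{eq:Qk-rho}, $\Qone_k(\rho)=O(1/k^2)$. Since $I_j(\rho)\ge1$, it then suffices to show
\[
\sum_{j<k}I_j'(\rho)=O(k^2).
\]

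To bound $I_j'(\rho)$, differentiate~\eqref{eq:L-def}:
\[
I_{j+1}'=I_j^2+2zI_jI_j' .
\]
At $\rho$ we have $2\rho I_j(\rho)=\tfrac12 I_j(\rho)=1-\tfrac12 e_j(\rho)\le 1$ and $I_j(\rho)^2\le 4$. Hence $I_{j+1}'(\rho)\le 4+I_j'(\rho)$, which by induction gives $I_j'(\rho)\le 4j+I_0'(\rho)=4j$. Therefore $\sum_{j<k}I_j'(\rho)\le 2k^2$, and
\[
\Qjpr_k(\rho)\le \Qone_k(\rho)\bigl(4k+2k^2\bigr)=O(1),
\]
uniformly in $k$. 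The finitely many small $k$ are trivially bounded, so a single constant $M$ works.

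The main point needing care is this last step: the cancellation requires the derivative bound $I_j'(\rho)=O(j)$, and the crude recursion delivers exactly that because the multiplier $2\rho I_j(\rho)\le1$ is at most one. If a sharper bound were needed, one could instead use $\prod(1-\tfrac12e_i)\approx i^2/j^2$, which also yields growth of order $j$. With only the crude bound, the result rests on the upper bound $\Qone_k(\rho)\le C/k^2$ from \eqref{eq:Qk-rho}. That bound can also be checked directly: $\log\Qone_k(\rho)=\log\rho+\sum\log(1-\tfrac12e_j(\rho))$, and $e_j(\rho)=4/j+O(\log j/j^2)$ by Proposition~\ref{thm:e-asymp}.
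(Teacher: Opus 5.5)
Your proof is correct, but it takes a different route from the paper's.

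\textbf{The paper's route.} It differentiates the recurrence for $Q_k$ at $\rho$ and inserts the asymptotics $I_k(\rho)=2-4/k+\dots$, $I_k'(\rho)\sim\frac43k$ and $Q_k(\rho)\sim K/k^2$. This gives the inhomogeneous linear recurrence $a_{k+1}=(1-2/k)a_k+\frac{2K}{3k}+O(k^{-2})$. From it the paper concludes that $Q_k'(\rho)\to K/3$, so the sequence is bounded.

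\textbf{Your route.} You log-differentiate the closed product form. Then you need only one-sided inequalities:
\begin{itemize}
\item $1\le I_j(\rho)\le 2$;
\item the crude bound $I_j'(\rho)\le 4j$, which follows from the multiplier $\frac12 I_j(\rho)=1-\frac12e_j(\rho)\le1$;
\item $Q_k(\rho)=O(k^{-2})$, which you rightly derive directly from Proposition~\ref{thm:e-asymp} using $\log(1-x)\le -x$, rather than leaning on the unproved display \eqref{eq:Qk-rho}.
\end{itemize}

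\textbf{What each approach buys.} Your argument avoids the asymptotic $I_k'(\rho)\sim\frac43k$, which the paper calls classical without proof. It also avoids the existence of the limit $K=\lim k^2Q_k(\rho)$ and any convergence analysis of the $(1-2/k)$-type recurrence. It is unaffected by the index shift ($I_k$ versus $I_{k-1}$) in the paper's differentiated recurrence, since you work from the correct product \eqref{eq:prod_Qk}.

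The paper's route gives more: the exact limit $K/3$, not just a bound. Your identity recovers this too once the sharper inputs are supplied. With $I_j'(\rho)\sim\frac43 j$ and $I_j(\rho)\to2$, one gets $Q_k'(\rho)=Q_k(\rho)\bigl(4k+\tfrac13k^2+o(k^2)\bigr)\to K/3$.
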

\begin{proof}
    Since the polynomials $\Qj_k(z)$ have positive coefficients, their derivatives are increasing in $z$ and reach their maximum at $\rho$. By differentiating the recurrence $\Qj_{k+1}(z) = 2z I_{k}(z) \Qj_k(z)$ and evaluating it at $\rho=1/4$, we obtain:
    \[
    \Qjpr_{k+1}(\rho) = 2 I_{k}(\rho) \Qj_k(\rho) + \frac{1}{2} I_{k}'(\rho) \Qj_k(\rho) + \frac{1}{2} I_{k}(\rho) \Qjpr_k(\rho).
    \]
    The classical asymptotics of Catalan trees yield $I_{k}(\rho) = 2 - 4/k + O(k^{-2})$ and $I_{k}'(\rho) \sim \frac{4}{3}k$. With $\Qj_k(\rho) \sim K/k^2$, injecting these equivalents into the recurrence gives the evolution of the sequence of derivatives:
    \[
    \Qjpr_{k+1}(\rho) = \left(1 - \frac{2}{k} + O(k^{-2})\right) \Qjpr_k(\rho) + \frac{2K}{3k} + O(k^{-2}).
    \]
    This first-order linear recurrence has the asymptotic form $a_{k+1} = (1 - 2/k)a_k + c/k$ with $c = 2K/3$. A classical study (for example via the Stolz-Cesàro theorem or by integrating factor) ensures that the sequence converges to $c/2 = K/3$. Thus, $\lim_{k \to \infty} \Qjpr_k(\rho) = K/3$. Since every convergent sequence is bounded, there indeed exists a global upper bound $M$.
\end{proof}

And we finally provide the key ideas of the proof of  Corollary~\ref{cor:B-expr}.

\begin{proof}[Proof of Corollary~\ref{cor:B-expr}]
    Recall that the singular term is given by the integral
    \[
    B = K \int_0^\infty \left( \frac{1}{u^2} - \frac{e^u}{(e^u - 1)^2} \right) du.
    \]
    To evaluate this integral formally, we exploit the classical hyperbolic rewriting of the kernel:
    \[
    \frac{e^u}{(e^u - 1)^2} = \frac{1}{4 \sinh^2(u/2)}.
    \]
    The integrand is then reformulated as an exact derivative, 
    \[
    \frac{1}{u^2} - \frac{1}{4 \sinh^2(u/2)} = \frac{d}{du} \left( \frac{1}{2} \coth\frac{u}{2} - \frac{1}{u} \right).
    \]
    The computation of the improper integral is thus reduced to evaluating this primitive at the bounds. 
    At infinity, $\coth(u/2) \to 1$ and $1/u \to 0$, hence a limit equal to $1/2$. 
    In the neighborhood of the origin, he local expansion of the hyperbolic cotangent takes the form $\coth(v) = 1/v + v/3 + O(v^3)$, which gives:
    \[
    \frac{1}{2} \coth\frac{u}{2} - \frac{1}{u} = \frac{1}{2} \left( \frac{2}{u} + \frac{u}{6} + O(u^3) \right) - \frac{1}{u} = \frac{u}{12} + O(u^3).
    \]
    The singularity perfectly cancels out and the primitive evaluates to $0$ at $u=0$.
    Consequently, the value of the integral is exactly $1/2$. We thus deduce $B = K/2$ and the asymptotic proportion $\kappa$ follows by dividing by 2, in accordance with singularity analysis~\cite{FS09}.
\end{proof}


\section{Appendix related to Section~\ref{sec:generalization}}
\label{app:generalization}


\begin{proof}[Proof ideas Theorem~\ref{thm:distribution-leaves}]
    Let us first introduce the bivariate generating function $A_k(z,y)$ of the class of plane binary trees of height at most $k$ (the variable $y$ still marks pairs of leaves at the $k$th level).
    Since the height of a tree of this class is less than or equal to $k$, we have 
    \begin{equation}\label{eq:A=I+F}
        A_k(z, y) = I_{k-1}(z) + F_k(z, y). 
    \end{equation}
    It follows from the root decomposition that the sequence $A_k(z, y)$ obeys the fundamental Catalan iteration with a modified initial condition encapsulating the leaf marking:
    \begin{equation}\label{eq:A-def}
        A_1(z, y) = 1 + zy, \qquad
        A_{k+1}(z, y) = 1 + zA_k(z, y)^2
        \quad \mbox{for } k \ge 1.
    \end{equation}
    Recurrence~\eqref{eq:A-def} can also be obtained algebraically from \eqref{eq:L-def}, \eqref{eq:F-def} and \eqref{eq:A=I+F}



\

    At the critical point $\rho=1/4$, the recurrence for $A_k(\rho, y)$ given in \eqref{eq:A-def} becomes
    \begin{equation}
    \label{eq:a-def}
        A_1(\rho, y) = 1 + \frac{y}{4}, \qquad
        A_{k+1}(\rho, y) = 1 + \frac{1}{4}A_k(\rho, y)^2
        \quad \mbox{for } k \ge 1.
    \end{equation}
    In particular, since $A_k(z,0)=I_{k-1}(z)$, in the case where $y=0$ recurrence~\eqref{eq:a-def} is the classical Catalan iteration:
    \[
        I_0(\rho)=1, \qquad
        I_{k+1}(\rho) = 1 + \frac{1}{4}I_k(\rho)^2
        \quad \mbox{for } k \ge 0.
    \]
    It is well known that $I_k(\rho)\to C(\rho)=2$ as $k\to\infty$ (see \cite{FlajoletOdlyzko1982}).  
    So let us define the deviation from this limit value:
    \begin{equation}
    \label{eq:delta-def}
        \delta_k(y) := 2 - A_k(\rho, y)
        \quad \mbox{for } k \ge 1.
    \end{equation}
    Substituting $A_k(\rho,y) = 2-\delta_k(y)$ into~\eqref{eq:a-def} yields the logistic-type recurrence
    \begin{equation}
    \label{eq:delta-recur}
        \delta_1(y) = 1 - \frac{y}{4}, \qquad
        \delta_{k+1}(y) = \delta_k(y) - \frac{\delta_k(y)^2}{4}
        \quad \mbox{for } k \ge 1.
    \end{equation}
    Since $|y|\le 1$, we have $\delta_1(y) \in [3/4, 5/4]$, which lies in the basin of attraction of $0$ for the logistic map \eqref{eq:delta-recur}.
    Hence, the sequence $\big(\delta_k(y)\big)_{k\ge1}$ converges to $0$, and its asymptotic behavior is universal.
    In fact, equation~\eqref{eq:delta-recur} is a discrete Riccati equation whose asymptotic behavior is classical (see Proposition~\ref{thm:e-asymp} and Appendix~\ref{app:analysis}).
    Moreover, since the initial condition $\delta_1(y)$ remains in a compact subset of $(0,2)$, the asymptotic expansion
    \begin{equation}
    \label{eq:delta-asym}
        \delta_k(y) = \frac{4}{k} + O\!\left(\frac{\log k}{k^{2}}\right)
    \end{equation}
    holds uniformly for $|y|\le 1$ (see e.g. \cite{FlajoletOdlyzko1982}).

\

    Now, using relations~\eqref{eq:A=I+F} and~\eqref{eq:delta-def}, we can express $F_k(\rho,y)$, as follows:
    \begin{equation}
    \label{eq:F-via-delta}
        F_k(\rho, y) = A_k(\rho, y) - I_{k-1}(\rho) = \big(2-\delta_k(y)\big) - \big(2-\delta_k(0)\big) = \delta_k(0) - \delta_k(y).
    \end{equation}
    Taking into account recurrence~\eqref{eq:delta-recur}, we have
    \[
        F_{k+1}(\rho, y) = 
        \left(\delta_k(0) - \frac{\delta_k(0)^2}{4}\right) - 
        \left(\delta_k(y) - \frac{\delta_k(y)^2}{4}\right) = 
        F_k(\rho, y)\left(1 - \frac{\delta_k(0) + \delta_k(y)}{4}\right).
    \]
    Therefore, asymptotic expansion~\eqref{eq:delta-asym} for $\delta_k(y)$ gives us
    \begin{equation}
    \label{eq:F-recur}
        F_{k+1}(\rho, y) = F_k(\rho, y) \left(1 - \frac{2}{k} + O\!\left(\frac{\log k}{k^2}\right)\right).
    \end{equation}
    This relation and a standard induction, together with the fact that the product $\prod_{j=m}^{k-1}(1-2/j)$ behaves like $m^2/k^2$, show us that
    \[
        F_k(\rho, y) = O(1/k^2)
    \]
    uniformly with respect to the initial condition $F_1(\rho, y) = y/4 \in [-1/4,1/4]$.

\
    
    It remains to show that $g_k := k^2 F_k(\rho, y)$ converges.
    From recurrence~\eqref{eq:F-recur}, we obtain
    \[
        g_{k+1} = \frac{(k+1)^2}{k^2}\left(1 - \frac{2}{k} + O\left(\frac{\log k}{k^2}\right)\right) g_k = \left(1 + O\left(\frac{\log k}{k^2}\right)\right)g_k.
    \]
    Since $F_k(\rho, y) = O(1/k^2)$, the sequence $(g_k)_{k\ge1}$ is bounded.
    Furthermore, the series $\sum \frac{\log k}{k^2}$ converges.
    Hence, $(g_k)_{k\ge1}$ converges (a bounded sequence with multiplicative increments tending to $1$ fast enough converges).
    Thus, the limit $\lim_{k\to\infty} k^2 F_k(\rho,y)$ exists.

\

    Iteration~\eqref{eq:a-def} has a unique parabolic fixed point at $a = 2$.
    By shifting our focus to the error term $\delta_k(y) = C(\rho) - A_k(\rho,y)$, we can directly observe the critical slowing of the sequence $\big(A_k(\rho,y)\big)_{k\ge1}$.
    To analyze the asymptotic behavior of non-linear recurrence~\eqref{eq:delta-recur} near the fixed point $0$, we rely on continuous iteration theory (see Milnor~\cite{milnor}).
    The dynamics of the mapping $x \mapsto x - x^2/4$ is linearized by introducing the Fatou coordinate $\Phi(x)$ defined as the unique solution (up to an additive constant) to the Abel equation:
    \[
        \Phi\left(x - \frac{x^2}{4}\right) = \Phi(x) + 1.
    \]
    Applying this coordinate to our error sequence $\big(\delta_k(y)\big)_{k\ge1}$ reduces the discrete dynamics to a simple translation: $\Phi\big(\delta_k(y)\big) = \Phi\big(\delta_1(y)\big) + k - 1$.
    Near the origin, the Fatou coordinate behaves as $\Phi(x) = \frac{4}{x} - c\log x + O(1)$, which implies that the initial condition $\delta_1(y)$ acts purely as a phase shift in the asymptotic expansion of the sequence.
    Since the standard Catalan error $\delta_k(0)$ and our perturbed error $\delta_k(y)$ share same recurrence~\eqref{eq:delta-recur} at the critical point $\rho$, the translation equation for $\Phi$ yields:
    \[
        \delta_k(y) = \frac{4}{k} - \frac{4\big(\Phi(\delta_1(y)) - 1\big)}{k^2} + O\left(\frac{\log k}{k^3}\right).
    \]
    When evaluating the difference $\delta_k(0) - \delta_k(y)$ for large $k$, the dominant term $4/k$ and the universal logarithmic corrections cancel out, intrinsically isolating the coefficient of $k^{-2}$.
    Taking into account that $\delta_1(0) = 1$, we substitute the initial condition $\delta_1(y) = 1 - y/4$.
    This reveals that $K(y)$ is a direct affine transformation of the Fatou coordinate:
    \[
        K(y) = 4 \Big( \Phi\left(1 - \frac{y}{4}\right) - \Phi(1) \Big).
    \]

\

    The underlying functional equation for $K(y)$ is thus a direct macroscopic consequence of the Abel equation. 
    By evaluating the mapping $x \mapsto x - x^2/4$ at our specific initial condition $x = 1 - y/4$, we have the algebraic identity:
    \[
        \left(1 - \frac{y}{4}\right) - \frac{1}{4}\left(1 - \frac{y}{4}\right)^2 = 1 - \frac{1}{4}\left(1 + \frac{y}{4}\right)^2.
    \]
    Injecting this identity back into the Abel equation $\Phi(x - x^2/4) = \Phi(x) + 1$ yields:
    \[
        \Phi\left( 1 - \frac{1}{4}\left(1 + \frac{y}{4}\right)^2 \right) = \Phi\left(1 - \frac{y}{4}\right) + 1.
    \]
    This establishes the functional equation governing the complete probability distribution:
    \begin{equation}
    \label{eq:K-rel}
        K\left( \left(1 + \frac{y}{4}\right)^2 \right) = K(y) + 4.
    \end{equation}
    
    Relation~\eqref{eq:K-rel} structurally binds all moments of the distribution.
    While extracting explicit numerical values for the proportions $\kappa^{[m]} = K^{[m]}/4$ requires numerical evaluation due to the transcendental nature of the Fatou coordinate at the origin, this functional equation completely characterizes the complete probability distribution of the leaves at the deepest level.
    Furthermore, relation~\eqref{eq:K-rel} provides direct access to the asymptotic tail of the distribution. The dominant singularity of the generating function $K(y) = \sum_{m \ge 1} K^{[m]} y^m$ is governed by the fixed points of the spatial mapping $y \mapsto \left(1 + \frac{y}{4}\right)^2$. Solving the equation $y = \left(1 + \frac{y}{4}\right)^2$ yields a unique parabolic fixed point at $\rho_K = 4$. 

    To determine the local behavior near this singularity, we set $y = 4 - \varepsilon$ for a small $\varepsilon > 0$. The mapping locally expands as $4 - \varepsilon \mapsto 4 - \varepsilon + \frac{\varepsilon^2}{16}$. Injecting this into the functional equation yields $K(4 - \varepsilon + \varepsilon^2/16) = K(4 - \varepsilon) + 4$. This local dynamic implies that near the singularity, the function exhibits a simple pole profile $K(y) \sim \frac{64}{4-y}$. 
    Rewriting this singular expansion as $K(y) \sim \frac{16}{1 - y/4}$, standard singularity analysis (or the direct transfer theorem) yields the asymptotic behavior of the coefficients: $K^{[m]} \sim 16 \cdot 4^{-m}$. Since the limit proportions are given by $\kappa^{[m]} = K^{[m]}/4$, we conclude that the probability of having $2m$ leaves at the deepest level exhibits a strict exponential decay:
    \[
        \kappa^{[m]} \underset{m\to\infty}\sim 4^{-m+1}. \qedhere
    \]
\end{proof}

\subsection*{Variance of the number of leaves at the deepest level}

Let $G(y) = \sum_{m \ge 1} \kappa^{[m]} y^m$ be the probability generating function of $P$, the number of leaf pairs (i.e., half the number of leaves) at the deepest level.  
From the functional equation $K\!\big((1+y/4)^2\big) = K(y) + 4$ and the relation $K(y) = 4G(y)$, we obtain the normalized form
\begin{equation}
\label{eq:func_eq_G}
    G\left(\left(1+\frac{y}{4}\right)^2\right) = G(y) + 1.
\end{equation}

\noindent\textbf{Mean.} Differentiating \eqref{eq:func_eq_G} with respect to $y$ is simplified as
\begin{equation}
\label{eq:diff1}
    \frac{1}{2}\left(1+\frac{y}{4}\right) G'\!\left(\left(1+\frac{y}{4}\right)^2\right) = G'(y).
\end{equation}
Then, setting $y=0$ yields
\[
\   \frac{1}{2} G'(1) = G'(0) = \kappa^{[1]}.
\]
Hence
\begin{equation}
\label{eq:mean}
    \mathbb{E}[P] = G'(1) = 2\kappa^{[1]}.
\end{equation}

\noindent\textbf{Second moment.} Differentiating \eqref{eq:diff1} once more:
\[
    \frac{d}{dy}\left[\frac{1}{2}\left(1+\frac{y}{4}\right) G'\!\left(\left(1+\frac{y}{4}\right)^2\right)\right] = G''(y).
\]
The left‑hand side expands to
\[
    \frac{1}{8} G'\!\left(\left(1+\frac{y}{4}\right)^2\right)
        + \frac{1}{4}\left(1+\frac{y}{4}\right)^2 G''\!\left(\left(1+\frac{y}{4}\right)^2\right).
\]
Evaluating at $y=0$ gives
\begin{equation}
\label{eq:diff2}
     \frac{1}{8} G'(1) + \frac{1}{4} G''(1) = G''(0).
\end{equation}
Now $G''(0) = 2\kappa^{[2]}$ and $G'(1)=2\kappa^{[1]}$ from \eqref{eq:mean}. Substituting into \eqref{eq:diff2}:
\[
    \frac{1}{8}(2\kappa^{[1]}) + \frac{1}{4} G''(1) = 2\kappa^{[2]}
    \;\Longrightarrow\;
    \frac{\kappa^{[1]}}{4} + \frac{1}{4} G''(1) = 2\kappa^{[2]}.
\]
Thus we derive
\begin{equation*}
     G''(1) = 8\kappa^{[2]} - \kappa^{[1]}.
\end{equation*}

\noindent\textbf{Variance.} The second factorial moment is $\mathbb{E}[P(P-1)] = G''(1)$, so
\[
    \mathbb{E}[P^2] = G''(1) + G'(1) = (8\kappa^{[2]} - \kappa^{[1]}) + 2\kappa^{[1]} = 8\kappa^{[2]} + \kappa^{[1]}.
\]
Therefore,
\[
    \operatorname{Var}(P) = \mathbb{E}[P^2] - (\mathbb{E}[P])^2
        = 8\kappa^{[2]} + \kappa^{[1]} - 4(\kappa^{[1]})^2.
\]

\noindent\textbf{Number of leaves.} Since the actual number of leaves at the deepest level is $L = 2P$, we obtain
\[
    \operatorname{Var}(L) = 4\operatorname{Var}(P)
        = 32\kappa^{[2]} + 4\kappa^{[1]} - 16(\kappa^{[1]})^2.
\]

\noindent Using the numerical values $\kappa^{[1]} \approx 0.7009272$ and $\kappa^{[2]} \approx 0.2225627$ 
from Table~\ref{tab:distribution}, we compute
\begin{align*}
    \mathbb{E}[L] &= 2\cdot 2\kappa^{[1]} = 4\kappa^{[1]} \approx 2.8037,\\
    \operatorname{Var}(L) &\approx 32\cdot 0.2225627 + 4\cdot 0.7009272 - 16\cdot (0.7009272)^2 \approx 0.7294.
\end{align*}

\end{document}